\newcommand{\sg}{\textnormal{sg}}
\newtheorem{theorem}{Theorem}
\newtheorem{corollary}[theorem]{Corollary}
\newtheorem{proposition}[theorem]{Proposition}
\theoremstyle{remark}
\newtheorem*{remark}{Remark}
\theoremstyle{definition}
\newtheorem{definition}[theorem]{Definition}
\numberwithin{theorem}{section} \numberwithin{equation}{section}
\numberwithin{example}{section}
\title[simultaneous representations of primes by binary quadratic forms]{Threefield identities and simultaneous representations of primes by binary quadratic forms }
\author{Eric Mortenson}
\begin{document}

\date{7 August  2012}

\begin{abstract}{Kaplansky [$2003$] proved a theorem on the simultaneous representation of a prime $p$ by two different principal binary quadratic forms.   Later, Brink found five more like theorems and claimed that there were no others.  By putting Kaplansky-like theorems into the context of threefield identities after Andrews, Dyson, and Hickerson, we find that there are at least two similar results not on Brink's list.  We also show how such theorems are related to results of Muskat on binary quadratic forms.}
\end{abstract}

\address{Department of Mathematics, The University of Queensland,
Brisbane, Australia 4072}

\email{etmortenson@gmail.com}

\maketitle

\section{Notation}
 Let $q$ be a complex number with $0<|q|<1$.  We recall some basics:
\begin{gather}
 (x)_{\infty}=(x;q)_{\infty}:=\prod_{i\ge 0}(1-q^ix),\\
{\text{and }}\ \ j(x;q):=(x)_{\infty}(q/x)_{\infty}(q)_{\infty}=\sum_{n}(-1)^nq^{\binom{n}{2}}x^n,\label{equation:JTPid}
\end{gather}
where in the last line the equivalence of product and sum follows from Jacobi's triple product identity.  We keep in mind the easily deduced fact that $j(q^n,q)=0$ for $n\in \mathbb{Z}.$  The following are special cases of the above definition.  Let $a$ and $m$ are integers with $m$ positive.  Define
\begin{gather}
J_{a,m}:=j(q^a;q^m), \ \ \overline{J}_{a,m}:=j(-q^a;q^m), \ {\text{and }}J_m:=J_{m,3m}=\prod_{i\ge 1}(1-q^{mi}).
\end{gather}

\section{Introduction}

Let $\Delta$ be a negative integer with $\Delta\equiv 0 \pmod 4$ (resp. $\Delta\equiv 1 \pmod 4$ ).  Recall that the principal binary quadratic form $F(x,y)$ of discriminant $\Delta$ is defined to be $x^2-\frac{\Delta}{4}y^2$ (resp. $x^2+xy+\frac{1-\Delta}{4}y^2$).  Kaplansky \cite{K} proved the following theorem on the simultaneous representation of a prime $p$ by two different principal binary quadratic forms:

\begin{theorem}\cite{K}\label{theorem:Kap-Thm}  A prime $p$, where $p\equiv 1 \pmod {16}$, is representable by both or none of the quadratic forms $x^2+32y^2$ and $x^2+64y^2.$  A prime $p$, where $p\equiv 9 \pmod {16}$, is representable by exactly one of the quadratic forms.
\end{theorem}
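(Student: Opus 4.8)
The plan is to convert each of the two representability conditions into a splitting condition in a number field via the theory of complex multiplication, and then to compare the two fields. Note first that a prime $p$ with $p\equiv1$ or $9\pmod{16}$ is exactly an odd prime with $p\equiv1\pmod 8$. Write $\zeta_m=e^{2\pi\imag/m}$. The form $x^2+32y^2$ is the principal form of the order $\Z[4\sqrt{-2}]$ of conductor $4$ in $\Q(\sqrt{-2})$, and $x^2+64y^2$ is the principal form of the order $\Z[8\imag]$ of conductor $8$ in $\Q(\imag)$; in both cases the class number is $4$, the class group is cyclic, and the principal genus---equivalently, the set of classes capable of representing a prime $p\equiv1\pmod 8$---has index $2$. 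By the theory of primes of the form $x^2+ny^2$, for an odd prime $p$ one has $p=x^2+32y^2$ if and only if $p$ splits completely in the ring class field $L_1$ of $\Z[4\sqrt{-2}]$, and $p=x^2+64y^2$ if and only if $p$ splits completely in the ring class field $L_2$ of $\Z[8\imag]$; moreover each $L_j$ is a degree-$8$ dihedral extension of $\Q$, unramified outside $2$, containing the corresponding genus field, which in both cases is $\Q(\zeta_8)=\Q(\imag,\sqrt2)$.

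The substantive step is to identify $L_1$ and $L_2$ explicitly. For $L_2$ this is essentially classical (Gauss): $p=x^2+64y^2$ holds precisely when $p\equiv1\pmod 8$ and $2$ is a fourth power modulo $p$, so $L_2=\Q(\zeta_8,2^{1/4})$, the splitting field of $x^4-2$. For $L_1$ I would prove that $L_1=\Q(\zeta_8,\sqrt{1+\imag})=\Q(\imag,\sqrt2,\sqrt{1+\imag})$: this is a quadratic extension of the genus field $\Q(\zeta_8)$, dihedral over $\Q$ and unramified outside $2$, and one checks that for $p\equiv1\pmod 8$ it is split completely by $p$ exactly when $p=x^2+32y^2$. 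In concrete terms: writing $p\equiv1\pmod 8$ as $p=c^2+2d^2$ (so $2\mid d$) and as $p=a^2+b^2$ with $a$ odd (so $4\mid b$), one has $p=x^2+32y^2\iff 4\mid d$ and $p=x^2+64y^2\iff 8\mid b$, and the content of identifying $L_1$ is that ``$4\mid d$'' is governed by a quadratic residue condition on $p$ which is compatible, in the precise way we need, with Gauss's criterion ``$8\mid b\iff 2$ is a fourth power''. Pinning down $L_1$ is the step I expect to be the main obstacle: $\Q(\sqrt{-2})$ supports no reciprocity law of its own, so one must either run a direct Chebotarev argument or invoke a rational biquadratic reciprocity law, and one has to verify that the residue condition defining $L_1$ is well defined, which works because $2$ is a quadratic residue modulo every $p\equiv1\pmod 8$.

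Granting the two identifications, the rest is a short computation in Galois theory. From $\zeta_8\sqrt2=1+\imag$ one gets $\zeta_{16}=\pm\sqrt{1+\imag}\cdot2^{-1/4}$, so $\zeta_{16}\in L_1L_2$, and comparing degrees (using $L_1\cap L_2=\Q(\zeta_8)$) gives $L_1L_2=\Q(\zeta_{16},2^{1/4})$, a Klein four-extension of $\Q(\zeta_8)$ whose three intermediate quadratic fields are precisely $L_1$, $L_2$, and $L_3:=\Q(\zeta_{16})$. Let $p$ be an odd prime; if $p$ is represented by either form then $p\equiv1\pmod 8$, so $p$ splits completely in $\Q(\zeta_8)$ and we may form $\mathrm{Frob}_p\in\Gal(L_1L_2/\Q(\zeta_8))\cong(\Z/2\Z)^2$. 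In a Klein four-group, an element lies in the subgroup fixing $L_3$ if and only if it lies in both or in neither of the subgroups fixing $L_1$ and $L_2$. Unwinding, $p$ splits completely in $\Q(\zeta_{16})$ if and only if $p$ is represented by both or by neither of $x^2+32y^2$ and $x^2+64y^2$, and otherwise $p$ is represented by exactly one of them; since a prime $p\equiv1\pmod 8$ splits completely in $\Q(\zeta_{16})$ precisely when $p\equiv1\pmod{16}$, this is Theorem~\ref{theorem:Kap-Thm}.
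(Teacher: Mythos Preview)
Your proposal is correct but follows a completely different route from the paper. The paper proves Theorem~\ref{theorem:Kap-Thm} by interpreting the theta identity $J_{1,4}J_{2,4}=J_{1,2}\overline{J}_{1,4}$ as an equality between two weighted counts: one side counts representations $8k+1=x^2+16y^2$ with weights depending on $x\bmod 8$ and the parity of $y$, the other counts representations $8k+1=x^2+8y^2$ weighted by the parity of $y$. For a prime $p\equiv 1\pmod 8$ each count collapses to a single signed term, and comparing signs gives the theorem. Your argument instead identifies the ring class fields $L_1,L_2$ explicitly, observes that $L_1L_2=\Q(\zeta_{16},2^{1/4})$ is a Klein-four extension of $\Q(\zeta_8)$ whose third quadratic subfield is $\Q(\zeta_{16})$, and reads off the statement from the group structure. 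This is essentially the class-field-theoretic line that Kaplansky and Brink took; indeed, the identification of $L_1$ that you flag as the main obstacle is equivalent to the Barrucand--Cohn criterion (``$-4$ is an $8$th power mod $p$'') that Kaplansky invoked, since the splitting field of $x^8+4$ is exactly $\Q(\zeta_8,\sqrt{1+\imag})$. What your approach buys is a transparent structural explanation of the ``both-or-none versus exactly-one'' dichotomy via the Klein-four Frobenius; what the paper's approach buys is an elementary $q$-series argument requiring no algebraic number theory, and a template (the threefield identity) that it then reuses to discover the new Theorems~\ref{theorem:NewKap1} and~\ref{theorem:NewKap2}.
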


\noindent Kaplansky proved his theorem using two well-known results:  {\em $2$ is a $4$th power modulo a prime $p$ if and only if $p$ is represented by $x^2+64y^2$ (Gauss \cite[p. 530]{G})} and {\em $-4$ is an $8$th power modulo a prime $p$ if and only if $p$ is represented by $x^2+32y^2$ (Barrucand and Cohn \cite{BC})}.  Using class field theory, Brink \cite{B1} was able to prove five more theorems similar to that of Kaplansky.  Three of which are

\begin{theorem}\cite[Theorem 1]{B1}\label{theorem:Brink-Thm1} A prime $p\equiv 1 \pmod {20}$ is representable by both or none of $x^2+20y^2$ and $x^2+100y^2$, whereas a prime $p\equiv 9 \pmod {20}$ is representable by exactly one of these forms.
\end{theorem}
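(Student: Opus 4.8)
The plan is to follow Kaplansky's own strategy from Theorem~\ref{theorem:Kap-Thm}, replacing his two classical criteria by their analogues for discriminant $-80$. The key inputs I would assemble are: first, a criterion of the shape \emph{$p$ is represented by $x^2+20y^2$ iff $p\equiv 1,9\pmod{20}$ together with some power-residue condition}, and second, a criterion of the shape \emph{$p$ is represented by $x^2+100y^2$ iff $p\equiv 1,9\pmod{20}$ together with a finer power-residue condition}. For the first, recall that a prime $p$ with $\leg{-5}{p}=1$ is represented by $x^2+5y^2$ iff $p\equiv 1,9\pmod{20}$ (a genus-theory fact, since the two forms of discriminant $-20$ are $x^2+5y^2$ and $2x^2+2xy+3y^2$, in distinct genera). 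The distinction between $x^2+5y^2$ and $x^2+20y^2$ among primes $p\equiv1,9\pmod{20}$ is governed by whether $5$ is a fourth power mod $p$ (equivalently whether $-5$ or a related element is a square in a suitable ray class field), and the distinction between $x^2+20y^2$ and $x^2+100y^2$ is governed by a further quartic/octic residue condition. Concretely, I would aim to show:
\begin{gather*}
p=x^2+20y^2 \iff p\equiv 1,9\!\!\!\pmod{20}\ \text{and}\ 5^{(p-1)/4}\equiv 1,\\
p=x^2+100y^2 \iff p\equiv 1,9\!\!\!\pmod{20}\ \text{and an additional power-residue condition holds}.
\end{gather*}

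Next I would isolate the arithmetic progression. For $p\equiv 9\pmod{20}$ one checks that the two relevant power-residue symbols are forced to take opposite values (the symbol distinguishing $x^2+5y^2$ from $x^2+20y^2$ and the symbol distinguishing $x^2+20y^2$ from $x^2+100y^2$ cannot both be trivial), so exactly one of the two forms represents $p$; and for $p\equiv 1\pmod{20}$ the two symbols are independent but a parity/reciprocity relation ties them together so that they are simultaneously trivial or simultaneously nontrivial, giving ``both or none.'' This is precisely the combinatorial shadow of the corresponding threefield identity after Andrews--Dyson--Hickerson alluded to in the abstract, and in the body of the paper I expect this identity to be the cleanest route: expand the relevant theta quotient built from the $J_{a,m}$'s, read off the coefficients as counting representations by the two forms, and match residues mod $20$ against the sign pattern of the $q$-series coefficients.

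The main obstacle will be pinning down the exact power-residue condition characterizing representability by $x^2+100y^2$ and verifying that it meshes correctly with the $x^2+20y^2$ condition modulo each of the two progressions $1,9\pmod{20}$. The conductor here is $20$ (for discriminant $-80$ one works in the ring class field of conductor $2$ over $\Q(\sqrt{-5})$, a degree-$8$ extension of $\Q$), so the residue condition is genuinely an octic one and its interaction with the quartic condition for $x^2+20y^2$ requires a careful reciprocity computation — this is where Brink's class field theory argument does the work, and where, alternatively, the threefield identity does it for us by encoding all the splitting data in a single $q$-series. Once both criteria are in hand, the deduction of the stated dichotomy is a short case check on $p \bmod 20$, exactly parallel to Kaplansky's deduction of Theorem~\ref{theorem:Kap-Thm} from the Gauss and Barrucand--Cohn results.
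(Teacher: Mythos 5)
There is a genuine gap here: your proposal never actually supplies the two ingredients on which it rests. The entire argument is conditional on two representability criteria (``$p=x^2+20y^2$ iff $p\equiv 1,9\pmod{20}$ and a quartic condition,'' ``$p=x^2+100y^2$ iff \ldots an additional power-residue condition holds'') plus an unspecified ``parity/reciprocity relation'' forcing the two symbols to agree when $p\equiv 1\pmod{20}$ and to disagree when $p\equiv 9\pmod{20}$. The second criterion is never stated, and the reciprocity relation is exactly the content of the theorem; asserting that ``Brink's class field theory argument does the work'' is citing the result rather than proving it. Worse, the one criterion you do write down concretely is false as stated: $p=29\equiv 9\pmod{20}$ satisfies $29=3^2+20\cdot 1^2$, yet $5^{(29-1)/4}=5^{7}\equiv -1\pmod{29}$, so $5$ is not a fourth power mod $29$ (similarly $89=3^2+20\cdot 2^2$ with $5^{22}\equiv-1\pmod{89}$). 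Unlike Kaplansky's situation, where the Gauss and Barrucand--Cohn criteria were available off the shelf, here the analogous criteria are not standard quotable facts, so a proof along your lines would have to establish them, and the interaction between the two conditions as $p$ varies mod $20$ (which your own examples show is delicate) is precisely what is missing.

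For contrast, the paper's route avoids residuacity criteria altogether: it proves the single theta identity $J_{1,5}J_{2,5}=J_1J_5$ and interprets both sides as weighted counts of representations, the left side counting $4k+1=x^2+y^2$ ($x>0$ odd) with signs determined by $x,y$ modulo $10$, the right side counting $4k+1=x^2+5y^2$ with sign $+$ when $x$ is odd, $y$ even and sign $-$ when $x$ is even, $y$ odd. For a prime $p$ each of these forms has a unique representation up to negating $y$, so the signed counts force a correspondence; since $y\equiv 0\pmod{10}$ in $p=x^2+y^2$ means exactly $p=x^2+100y^2$, and $y$ even in $p=x^2+5y^2$ means exactly $p=x^2+20y^2$, a short case analysis of the admissible residues of $(x,y)$ modulo $10$ for $p\equiv 1\pmod{20}$ versus $p\equiv 9\pmod{20}$ yields the ``both or none'' versus ``exactly one'' dichotomy directly. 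If you want to salvage your approach, you would need to prove (not posit) correct analogues of the Gauss and Barrucand--Cohn criteria for discriminants $-80$ and $-400$, which is essentially redoing Brink's ring-class-field computation.
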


\begin{theorem}\cite[Theorem 4]{B1}\label{theorem:Brink-Thm4} A prime $p\equiv 1,65,81 \pmod {112}$ is representable by both or none of $x^2+14y^2$ and $x^2+448y^2$, whereas a prime $p\equiv 9,25,57 \pmod {112}$ is representable by exactly one of these forms.
\end{theorem}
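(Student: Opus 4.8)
The plan is to follow the class field theory method Brink used for his other five theorems, reducing each representability condition to a splitting condition in an explicit ring class field and then comparing Frobenius elements in the compositum. Recall the standard dictionary (cf.\ Cox, \emph{Primes of the form $x^2+ny^2$}): if $\calO$ is the order of discriminant $D<0$ in an imaginary quadratic field $K$ with ring class field $L$, then a rational prime $p\nmid D$ is represented by the principal form of discriminant $D$ if and only if $p$ splits completely in $L$. For $x^2+14y^2$ we have $D_1=-56$, which is a fundamental discriminant, so $K_1=\Q(\sqrt{-14})$ and $L_1$ is the Hilbert class field of $K_1$, with $[L_1:K_1]=h(-56)=4$ and $\Gal(L_1/\Q)$ dihedral of order $8$. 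For $x^2+448y^2$ we have $D_2=-1792=16^2\cdot(-7)$, so $K_2=\Q(\sqrt{-7})$, $\calO_2$ is the order of conductor $16$, $L_2$ its ring class field, $[L_2:K_2]=h(-1792)=8$, and $\Gal(L_2/\Q)$ generalized dihedral of order $16$. Thus, for $p\nmid 14$, the prime $p$ is of the form $x^2+14y^2$ if and only if $\mathrm{Frob}_p$ is trivial on $L_1$, and of the form $x^2+448y^2$ if and only if $\mathrm{Frob}_p$ is trivial on $L_2$.

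Next I would set $M:=L_1L_2$ and make things explicit. The largest abelian-over-$\Q$ subfield of $M$ is ramified only at $2$ and $7$, with conductor dividing $112=\operatorname{lcm}(56,16)$ (the $7$ coming from the field discriminants, the $16$ from the conductor of $\calO_2$), and through it $\mathrm{Frob}_p$ records the class of $p$ modulo $112$; this is the source of the modulus in the statement. Choosing explicit radical generators $L_1=K_1(\beta_1)$, $L_2=K_2(\beta_2)$ — equivalently, producing a quartic power-residue criterion for representability by $x^2+14y^2$ and an octic one for $x^2+448y^2$, in the spirit of the Gauss and Barrucand--Cohn criteria underlying Kaplansky's theorem — then turns the two splitting conditions into explicit residue conditions on $p$.

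The crux is a finite computation inside $G:=\Gal(M/\Q)$. Put $S_1:=\Gal(M/L_1)$ and $S_2:=\Gal(M/L_2)$, both normal in $G$ since $L_1,L_2$ are Galois over $\Q$, so that ``$p$ split in $L_i$'' is equivalent to ``$\mathrm{Frob}_p\in S_i$''; and let $N\trianglelefteq G$ be the kernel of the natural map from $G$ onto the quotient of $(\Z/112\Z)^{\times}$ realized by the abelian part of $M$. The theorem is then the purely group-theoretic assertion that on the coset of $N$ over $p\equiv 1,65,81\pmod{112}$ one has $g\in S_1\iff g\in S_2$, whereas on the coset over $p\equiv 9,25,57\pmod{112}$ one has $g\in S_1\iff g\notin S_2$. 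I expect this to reduce to exhibiting a single quadratic field $\Q(\sqrt d)\subseteq M$, ramified only at $2$ and $7$, such that $\leg{d}{p}$ separates $\{1,65,81\}$ from $\{9,25,57\}$ modulo $112$ and such that a biquadratic-type relation over $\Q(\sqrt d)$ forces the two splitting behaviours to agree exactly when $\leg{d}{p}=1$ — precisely the mechanism in Kaplansky's proof, where the two power-residue conditions are equivalent for $p\equiv1\pmod{16}$ and complementary for $p\equiv9\pmod{16}$.

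The main obstacle is this last step: one must pin down the ring class field $L_2$ of conductor $16$ in $\Q(\sqrt{-7})$ explicitly — including the ramification bookkeeping at the prime $2$, which splits in $\Q(\sqrt{-7})$ yet divides the conductor — and then verify the linkage across the subfield lattice of $M$; once these are in hand, the congruence arithmetic is routine. An alternative, more in keeping with the theme of this paper, is to bypass class field theory: realise the (suitably weighted) generating functions of primes represented by $x^2+14y^2$ and by $x^2+448y^2$ as theta quotients, place them in a threefield theta identity after Andrews--Dyson--Hickerson whose third term is the generating function of the auxiliary form distinguishing the residue classes modulo $112$, and read the stated dichotomy directly off that identity.
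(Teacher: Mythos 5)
Your write-up is a programme rather than a proof: the class-field-theoretic framework (representability by $x^2+14y^2$ $\Leftrightarrow$ splitting in the Hilbert class field of $\Q(\sqrt{-14})$, representability by $x^2+448y^2$ $\Leftrightarrow$ splitting in the ring class field of conductor $16$ of $\Q(\sqrt{-7})$) is set up correctly, but the decisive step — pinning down the compositum and verifying how the two splitting conditions are coupled over each residue class mod $112$ — is exactly what you defer as ``the main obstacle,'' so nothing is actually proved. Moreover, the specific mechanism you propose for that step cannot work: you ask for a quadratic field $\Q(\sqrt d)$ ramified only at $2$ and $7$ such that $\leg{d}{p}$ separates $\{1,65,81\}$ from $\{9,25,57\}$ modulo $112$. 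All six of these residues are squares modulo $112$ (squares mod $16$ are $\{1,9\}$ and squares mod $7$ are $\{1,2,4\}$; each listed residue reduces into these sets, e.g.\ $57\equiv 9\ (16)$, $\equiv 1\ (7)$ and $65\equiv 1\ (16)$, $\equiv 2\ (7)$), so every quadratic character of conductor dividing $112$ takes the value $+1$ on all of them and no such $d$ exists. The true dividing line is quartic, not quadratic: it records which of the two classes $x^2+14y^2$, $2x^2+7y^2$ in the principal genus of discriminant $-56$ represents $p$, equivalently a degree-four non-abelian splitting condition; your appeal to ``precisely the mechanism in Kaplansky's proof'' misreads that proof as well, since $1$ and $9$ are likewise both squares mod $16$ and Kaplansky's separation comes from quartic/octic residue symbols, not a Legendre symbol.

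For comparison with the paper: this statement is quoted from Brink and not reproved here, but the machinery of Section \ref{section:NewKap2-Mu3} delivers it directly. The identity $q\overline{J}_{2,8}j(q^7;-q^7)+J_{1,4}J_{7,14}=\overline{J}_{1,4}J_{14,28}$, interpreted as an equality of weighted representation counts for $7x^2+2y^2$, $x^2+14y^2$ and $x^2+7y^2$, shows (as in the proof of Theorem \ref{theorem:NewKap2}) that for a prime $p\equiv 1,65,81\pmod{112}$ the unique representation $p=x^2+7y^2$ has $y\equiv 0\pmod 8$ — i.e.\ $p=x^2+448y^2$ — exactly when $p$ is represented by $x^2+14y^2$, while for $p\equiv 9,25,57\pmod{112}$ it has $y\equiv 0\pmod 8$ exactly when $p$ is represented by $7x^2+2y^2$, i.e.\ not by $x^2+14y^2$; this is the mod-$112$ congruence bookkeeping listed in that section. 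Your closing sentence gestures at this theta-identity route, but, like the class-field route, it is not carried out, so as it stands the argument has a genuine gap.
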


\begin{theorem}\cite[Theorem 5]{B1}\label{theorem:Brink-Thm5} A prime $p\equiv 1,169 \pmod {240}$ is representable by both or none of $x^2+150y^2$ and $x^2+960y^2$, whereas a prime $p\equiv 49,121 \pmod {240}$ is representable by exactly one of these forms.
\end{theorem}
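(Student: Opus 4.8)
The plan is to first recast both representability conditions via class field theory, as in Brink's proofs of Theorems~\ref{theorem:Brink-Thm1} and~\ref{theorem:Brink-Thm4}, and then to make the resulting comparison effective through a threefield identity. For a prime $p$ coprime to $30$, Cox's theorem on primes of the form $x^2+ny^2$ says that $p=x^2+150y^2$ is solvable exactly when $p$ splits completely in the ring class field $\Omega_1$ of the order $\Z[\sqrt{-150}]$ --- the order of discriminant $-600$, i.e.\ of conductor $5$ in $\Q(\sqrt{-6})$ --- and that $p=x^2+960y^2$ is solvable exactly when $p$ splits completely in the ring class field $\Omega_2$ of $\Z[\sqrt{-960}]$, the order of discriminant $-3840$, of conductor $16$ in $\Q(\sqrt{-15})$. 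So the statement is one about the joint behaviour of the two Artin conditions $\left(\tfrac{\Omega_1/\Q}{p}\right)=1$ and $\left(\tfrac{\Omega_2/\Q}{p}\right)=1$ as $p$ ranges over residue classes modulo $240$.

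Next I would carry out the genus-theoretic reduction. The biquadratic field $\Q(\sqrt{-6},\sqrt{-15})=\Q(\sqrt{-6},\sqrt{10})$ has the three quadratic subfields $\Q(\sqrt{-6})$, $\Q(\sqrt{-15})$, and $\Q(\sqrt{10})$; this is the ``three field'' underlying the identity. A $2$-, $3$- and $5$-adic check shows that the four classes $p\equiv 1,49,121,169\pmod{240}$ are exactly the classes $\equiv1\pmod{24}$ that are $\equiv\pm1\pmod5$, so that for such $p$ all of $\Q(\sqrt2)$, $\Q(\sqrt{-3})$, $\Q(\sqrt5)$ --- hence the genus fields of both orders --- split completely. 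What then remains of each Artin condition is a single further power-residue symbol: a quadratic character modulo $16$ (coming from the conductor $16$) for $\Omega_2$, and the quartic residue symbol $\leg{p}{5}_{4}$ (coming from the conductor $5$) for $\Omega_1$. The crux is that the two Artin conditions agree when the product of these two characters is $+1$ and are opposite when it is $-1$; evaluating, this product is $+1$ on $\{1,169\}$ and $-1$ on $\{49,121\}$, which is precisely the stated dichotomy.

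To make that last step uniform and rigorous --- and to situate it in the Andrews--Dyson--Hickerson framework --- I would encode it as an identity of $q$-series. With $r_D(n)=\#\{(x,y)\in\Z^2:x^2+Dy^2=n\}$ one has $\sum_{n}r_D(n)q^n=\varphi(q)\varphi(q^D)$ where $\varphi(q)=\sum_{k\in\Z}q^{k^2}=\overline{J}_{1,2}$, and for $p$ prime to $30$ the coefficient of $q^p$ in $\varphi(q)\bigl(\varphi(q^{150})-\varphi(q^{960})\bigr)$ is $4$ times the difference of the two representability indicators. The aim is an identity expressing this series --- or the subseries picked out by a roots-of-unity filter to the classes $1,49,121,169\pmod{240}$ --- as a three-term sum of Hecke-type theta series attached respectively to $\Q(\sqrt{-6})$, $\Q(\sqrt{-15})$, and $\Q(\sqrt{10})$. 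I would establish it by writing $r_{150}$ and $r_{960}$ through their Hecke-character (Gauss-sum) descriptions, combining the two double sums over the biquadratic field, and collapsing the result with Jacobi's triple product~\eqref{equation:JTPid} and the vanishing $j(q^n;q)=0$; matching the outcome against a quotient of $J_{a,m}$'s then makes the prime-indexed coefficients --- their vanishing on $\{1,169\}$ and their value $\pm4$ on $\{49,121\}$ --- readable off the exponents modulo $240$.

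The routine parts are the congruence bookkeeping modulo $16$, $3$ and $5$ and the final reading-off of exponents. The genuine obstacle is producing the threefield identity in closed form: one must show that the two double sums, attached to orders of \emph{different} conductors ($5$ and $16$) over \emph{different} quadratic fields, telescope after the triple-product manipulations into a single clean theta quotient tied to $\Q(\sqrt{10})$. Forcing the conductors and unit indices to cooperate is exactly the ``Kaplansky miracle'' that pins down the modulus $240$ and the particular pair of forms, and it is the step that must be done by hand rather than by quoting a general theorem --- just as in the original arguments behind Theorems~\ref{theorem:Kap-Thm}, \ref{theorem:Brink-Thm1} and~\ref{theorem:Brink-Thm4}.
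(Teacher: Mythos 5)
First, a point of comparison: the paper does not prove Theorem \ref{theorem:Brink-Thm5} at all; it is quoted from Brink \cite{B1} as background, and the threefield-identity method is only carried out for Theorems \ref{theorem:Kap-Thm}, \ref{theorem:Brink-Thm1}, \ref{theorem:NewKap1}, \ref{theorem:NewKap2} and Muskat's theorems. So your argument has to stand on its own, and as written it does not: it is a plan whose central object --- the closed-form threefield identity for $D=-6$, $E=-15$, $F=10$ --- is never produced; you yourself flag it as ``the genuine obstacle.'' In the paper's method the explicit identity (e.g.\ $J_{1,5}J_{2,5}=J_1J_5$ for Theorem \ref{theorem:Brink-Thm1}, or Corollary \ref{corollary:NewKapMu} for Theorem \ref{theorem:NewKap2}), together with the explicit weight systems on the solution sets, \emph{is} the proof; without it nothing has been established.

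Second, there is a genuine error in your class-field-theoretic reduction. You claim that once $p\equiv 1,49,121,169\pmod{240}$ forces the genus conditions, ``what then remains of each Artin condition is a single further power-residue symbol'' --- a quadratic character modulo $16$ for $x^2+960y^2$ and $\leg{p}{5}_4$ for $x^2+150y^2$ --- and that the theorem follows by multiplying these two characters. Both residual symbols are functions of $p$ modulo $80$, hence constant on each class modulo $240$, so your reduction would make representability by each individual form a congruence condition on $p$. That is false: the form class group of discriminant $-600$ has order $8$ with four genera, and that of discriminant $-3840$ has order $16$ with eight genera, so each genus contains two classes and the principal form is not separated from its genus-mate by congruences. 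Concretely, $241$ and $4801$ are both primes $\equiv 1\pmod{240}$, yet $241$ is represented by neither form while $4801=49^2+150\cdot 4^2=31^2+960\cdot 2^2$ is represented by both. The actual content of the theorem --- as in Brink's proof, and as in the paper's treatment of the analogous cases --- is that the two \emph{non-congruence} splitting conditions (in the ring class fields of conductor $5$ over $\Q(\sqrt{-6})$ and conductor $16$ over $\Q(\sqrt{-15})$) are correlated by a character that is a congruence modulo $240$; your outline locates the congruence in the wrong place, and the step that would exhibit this correlation (the threefield identity, or its class-field avatar) is exactly the step you leave undone.
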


\noindent In \cite{B1}, Brink claims that these are the only results of their kind and gives a heuristic argument as support.  As an example, Brink shows that there is no similar result for primes represented by $x^2+128y^2$ and $x^2+256y^2$.  In \cite{B2}, Brink gives elementary proofs of Theorems \ref{theorem:Kap-Thm} and \ref{theorem:Brink-Thm1} and also shows that Theorem \ref{theorem:Kap-Thm} is equivalent to a result of Glaisher \cite{Gl}:
{\em Let $p$ be an odd prime and let $h$ and $h^{\prime}$ be the class numbers corresponding to the discriminants $-4p$ and $-8p$ respectively.  If $p\equiv 1 \pmod {16}$, then either both or none of $h$ and $h^{\prime}$ are divisible by $8$; if $p\equiv 9 \pmod {16}$, then exactly one of these class numbers is divisible by $8$.}

It turns out that there are at least two pairs of discriminants for Kaplansky-like theorems on principal binary quadratic forms that are not on Brink's list.  Our two new results read
\begin{theorem}\label{theorem:NewKap1}  A prime $p\equiv 1 \pmod {48}$, is representable by both or none of $x^2+64y^2$ and $x^2+288y^2,$ whereas a prime $p\equiv 25 \pmod {48}$, is representable by exactly one of the quadratic forms.
\end{theorem}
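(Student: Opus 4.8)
The plan is to recast the statement as an assertion about the coefficients of weight-one theta series at prime arguments and then deduce it from a threefield identity of the Andrews--Dyson--Hickerson type, proved by the $j(x;q)$-calculus recalled in \S1. First I would pass from ``representable'' to ``number of representations''. Both progressions in the statement force $p\equiv1\pmod 8$, so that $-1$ and $2$ are squares modulo $p$. The form $x^2+64y^2$ is primitive of discriminant $-256<-4$, hence has automorph group $\{\pm I\}$; as a prime $p\nmid 256$ factors into conjugate invertible prime ideals of the order $\Z[8\imag]$, whose unit group is $\{\pm1\}$, the equation $x^2+64y^2=p$ has either no solution or exactly four, and likewise $x^2+288y^2=p$ in $\Z[12\sqrt{-2}]$. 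Writing $r_{64}(p),r_{288}(p)\in\{0,4\}$ for these counts, Theorem~\ref{theorem:NewKap1} becomes the claim that $r_{64}(p)=r_{288}(p)$ when $p\equiv1\pmod{48}$ and $r_{64}(p)\ne r_{288}(p)$ when $p\equiv25\pmod{48}$. Since $r_{64}(p)$ and $r_{288}(p)$ are by definition the coefficients of $q^p$ in $\theta_{64}(q):=\sum_{m,n\in\Z}q^{m^2+64n^2}$ and $\theta_{288}(q):=\sum_{m,n\in\Z}q^{m^2+288n^2}$, it is enough to understand the coefficient of $q^p$ in $\theta_{64}(q)-\theta_{288}(q)$ for such primes.

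The core of the argument is a threefield identity for $\theta_{64}(q)-\theta_{288}(q)$. The two forms belong to the distinct fields $\Q(\imag)$ and $\Q(\sqrt{-2})$, whereas the congruence $p\equiv1\pmod 3$ common to both cases of the theorem is governed by a third field, $\Q(\sqrt{-3})$; the aim is an identity expressing $\theta_{64}(q)-\theta_{288}(q)$ as an explicit combination of theta quotients in the $j(x;q)$ and $J_{a,m}$ of \S1 together with a theta series attached to $x^2+xy+y^2$ (equivalently to $x^2+3y^2$), arranged so that the coefficient of $q^p$ on the right-hand side depends only on $p\bmod{48}$. I would prove it by writing each side as a specialization of $j(x;q)$, regrouping the double sums into Hecke/Appell--Lerch form, carrying out the relevant dissection, and repeatedly invoking the Jacobi triple product identity~\eqref{equation:JTPid} and the vanishing $j(q^n;q)=0$. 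This is where essentially all of the work lies, and the main obstacle should be the dissection bookkeeping: one has to fit the $2$-adic factor, which reproduces the $1$ versus $9\pmod{16}$ alternative already occurring in Kaplansky's Theorem~\ref{theorem:Kap-Thm}, together with the $\Q(\sqrt{-3})$ factor, which imposes $p\equiv1\pmod 3$, in such a way that the resulting obstruction is a function of $p$ modulo $48$ and nothing finer.

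With the identity in hand the conclusion follows at once. Reading off the coefficient of $q^p$ for $p$ a prime $\equiv1$ or $25\pmod{48}$: on the left it equals $r_{64}(p)-r_{288}(p)$ by the reduction above, and on the right it equals $0$ if $p\equiv1\pmod{48}$ and $\pm4$ if $p\equiv25\pmod{48}$. In the first case $r_{64}(p)=r_{288}(p)$, so $p$ is represented by both of the forms or by neither; in the second, $|r_{64}(p)-r_{288}(p)|=4$ with $r_{64}(p),r_{288}(p)\in\{0,4\}$, so exactly one of the forms represents $p$. As a check on the identity and on the sign of the $\pm4$ I would finally match this against Muskat's linking relations among the forms of discriminants $-256$ and $-1152$, which additionally yields an independent, purely arithmetic, derivation of the same dichotomy.
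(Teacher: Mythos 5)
Your reduction to representation numbers is fine ($r_{64}(p),r_{288}(p)\in\{0,4\}$ for primes $p\equiv 1\pmod 8$), but the heart of your argument --- the ``core identity'' for $\theta_{64}(q)-\theta_{288}(q)$ --- is only postulated, and in the form you need it cannot exist. You require a right-hand side built from $j$- and $J$-quotients whose coefficient at $q^p$ ``depends only on $p\bmod{48}$''. It does not: for $p=73\equiv 25\pmod{48}$ one has $73=3^2+64\cdot 1^2$ and no representation by $x^2+288y^2$, so $r_{64}-r_{288}=+4$, while for $p=313\equiv 25\pmod{48}$ one has $313=5^2+288\cdot 1^2$ and no representation by $x^2+64y^2$, so $r_{64}-r_{288}=-4$. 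So the left-hand side is not a congruence function of $p$, whereas any theta-quotient combination of the type you describe has prime coefficients determined by congruences; this is just the classical fact (Gauss, quoted in the introduction) that representability by $x^2+64y^2$ is a quartic-residue condition, the relevant class groups of discriminants $-256$ and $-1152$ having order $>2$, so the associated ring class fields are dihedral and splitting is not governed by congruences. Even the weaker statement you actually use, that the coefficient has absolute value $4$ on the class $25\pmod{48}$, cannot be read off from such an identity, and no proof of it is offered. The appeal to $\mathbb{Q}(\sqrt{-3})$ as the third field also does not match the Andrews--Dyson--Hickerson setup ($D$, $E$, and the squarefree part $F$ of $DE$), under which the companions of $\mathbb{Q}(\imag)$ and $\mathbb{Q}(\sqrt{-2})$ force $F=2$.

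The paper avoids exactly this obstruction by never touching $\theta_{64}$ or $\theta_{288}$ directly. It works with the class-number-one forms $x^2+y^2$ and $x^2+2y^2$ and encodes the subtle information in \emph{congruence weights on} $(x,y)$: the excess of weighted solutions of $12k+1=x^2+y^2$ (conditions on $x,y\bmod{24}$) equals the excess of weighted solutions of $12k+1=x^2+2y^2$ ($y\equiv 0$ versus $6\pmod{12}$), the generating-function form being the easy product rearrangement $\overline{J}_{1,3}\,j(q;-q^3)=J_{6,12}\overline{J}_{2,6}$. For a prime $p$ the representation by each of these two forms is essentially unique, so the weighted identity collapses to a biconditional linking ``$y\equiv 0\pmod 8$ in $p=x^2+y^2$'' (i.e.\ $p=x^2+64y^2$) with ``$y\equiv 0\pmod{12}$ in $p=x^2+2y^2$'' (i.e.\ $p=x^2+288y^2$), with the orientation of the biconditional decided by which residues of $x\bmod{24}$ are possible when $p\equiv 1$ or $25\pmod{48}$. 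If you want to salvage your plan, you must either adopt such a weighted two-form identity or replace your congruence-determined right-hand side by a genuinely non-congruence object (e.g.\ quartic residue symbols or a dihedral weight-one form), which is a different and substantially harder route.
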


\begin{theorem}\label{theorem:NewKap2} A prime $p\equiv 1,65,81 \pmod {112}$ is representable by both or none of $x^2+56y^2$ and $x^2+448y^2$, whereas a prime $p\equiv 9,25,57 \pmod {112}$ is representable by exactly one of these forms.
\end{theorem}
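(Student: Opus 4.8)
The plan is to derive Theorem~\ref{theorem:NewKap2} from Brink's Theorem~\ref{theorem:Brink-Thm4}. Note first that the two statements involve the \emph{same} auxiliary form $x^2+448y^2$ and the \emph{same} six residue classes modulo $112$; they differ only in that $x^2+14y^2$ has been replaced by $x^2+56y^2$. Hence it suffices to establish the elementary claim that \emph{if $p$ is a prime with $p\equiv 1$ or $9\pmod{16}$, then $p$ is represented by $x^2+14y^2$ if and only if $p$ is represented by $x^2+56y^2$}. Granting this, and noting that $16\mid 112$, that $1,65,81$ are each $\equiv 1\pmod{16}$, and that $9,25,57$ are each $\equiv 9\pmod{16}$, one may replace ``representable by $x^2+56y^2$'' throughout Theorem~\ref{theorem:NewKap2} by ``representable by $x^2+14y^2$''; the resulting statement is precisely Theorem~\ref{theorem:Brink-Thm4}.

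To prove the claim, one direction is immediate: a representation $p=X^2+56Y^2$ gives $p=X^2+14(2Y)^2$. Conversely, suppose $p=x^2+14y^2$. Reducing modulo $16$ and using $14\equiv-2$ gives $p\equiv x^2-2y^2\pmod{16}$. If $y$ were odd then $y^2\equiv 1\pmod 8$, so $2y^2\equiv 2\pmod{16}$, forcing $x^2\equiv p+2\equiv 3$ or $11\pmod{16}$; but the squares modulo $16$ lie in $\{0,1,4,9\}$, a contradiction. Therefore $y=2y'$ is even and $p=x^2+56y'^2$, as required.

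I expect essentially no obstacle along this route: the only real content is spotting the substitution $n\mapsto 4n$ together with the $2$-adic constraint coming from $p\bmod 16$, followed by the routine check that all six classes modulo $112$ sit over $\{1,9\}$ modulo $16$. One could instead give a self-contained proof in the spirit of the rest of the paper --- first obtaining the analogue for $x^2+56y^2$ of the quartic/octic residue criteria of Gauss and Barrucand--Cohn (controlled here by ring class fields inside $\mathbb{Q}(\sqrt{-7})$, $\mathbb{Q}(\sqrt{-14})$, and $\mathbb{Q}(\sqrt{2})$), then encoding representability through the theta series $\sum_{x,y}q^{x^2+56y^2}$ and $\sum_{x,y}q^{x^2+448y^2}$, and finally proving the relevant threefield identity that expresses a suitable combination of these as an explicit product of Jacobi theta functions $j(x;q)$, from which the dichotomy modulo $112$ would follow by comparing $p$-th coefficients. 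In that approach the genuine difficulty is pinning down and proving the correct threefield identity --- which is exactly what the reduction to Theorem~\ref{theorem:Brink-Thm4} above lets us bypass for this particular result.
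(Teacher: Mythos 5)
Your reduction is correct: the congruence check is right ($1,65,81\equiv 1$ and $9,25,57\equiv 9\pmod{16}$, since $112=16\cdot 7$), and the $2$-adic descent is sound — if $p=x^2+14y^2$ with $y$ odd then $14y^2\equiv 14\pmod{16}$, so $x^2\equiv p+2\equiv 3$ or $11\pmod{16}$, impossible, whence $y$ is even and $p=x^2+56(y/2)^2$; the converse is trivial. With that equivalence, Theorem \ref{theorem:NewKap2} becomes literally Theorem \ref{theorem:Brink-Thm4}, so your argument is complete modulo citing Brink. However, this is a genuinely different route from the paper's. The paper never invokes Brink's class-field-theoretic result; it proves Theorem \ref{theorem:NewKap2} self-containedly from the theta identity (\ref{equation:NewKap2-A}), $q\overline{J}_{2,8}j(q^7;-q^7)+J_{1,4}J_{7,14}=\overline{J}_{1,4}J_{14,28}$, interpreting the three terms as generating functions for weighted counts of representations of $8k+1$ by $x^2+14y^2$, $7x^2+2y^2$, and $x^2+7y^2$ (with $y\equiv 0\pmod 4$), and then sorting the weights by congruence classes mod $112$; the same identity (with its three companions in Corollary \ref{corollary:NewKapMu}) simultaneously yields Muskat's Theorem \ref{theorem:Mu-Thm3}, which your shortcut does not touch. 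What your approach buys is brevity and elementarity given Brink's theorem as a black box — and it makes transparent that the "new" pair of discriminants is equivalent to one on Brink's list via a trivial $2$-adic observation; what the paper's approach buys is independence from class field theory and consistency with its stated program (compare the remark after Theorem \ref{theorem:NewKap1}, where an analogous shortcut via Kaplansky's theorem is acknowledged but deliberately avoided in favor of threefield identities). Your sketched alternative "self-contained" route via a new threefield identity is only an outline, but it is not needed, since your first argument already stands.
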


Prior to the work of Kaplansky and Brink, Muskat \cite{Mu} used Dirichlet's method of proof of the above mentioned theorem of Gauss to prove results on the simultaneous representations of primes by binary quadratic forms, with several of Muskat's results being related to Brink's five theorems.  Indeed, Williams \cite{W2} pointed out that Brink's results follow from Muskat's.   In particular, Williams showed that \cite[Theorem 1]{Mu} implies \cite[Theorem 1]{B1}.  Among the corollaries to Muskat's general results, we find the following:

\begin{theorem}\cite[Theorem 1]{Mu}\label{theorem:Mu-Thm1}  Let $p\equiv 1 \textup{ or } 9 \pmod {20}$ be a prime; then we may write $p=M^2+N^2$ with $N$ even and $M\equiv 1 \pmod 4$, and $p=A^2+5B^2$.  Noting that either $M$ or $N$ is divisible by $5$, we conclude that:
\begin{itemize}
\item[(a)] if $p\equiv 1 \pmod {20}$, then $A$ is even if and only if $5|M$,
\item[(b)] if $p\equiv 9 \pmod {20}$, then $A$ is even if and only if $5|N$.
\end{itemize}
\end{theorem}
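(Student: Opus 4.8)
The plan is to start from the (classical) existence of the two representations. Since $p\equiv1\pmod4$ we have $p=M^2+N^2$, and after replacing $M$ by $-M$ if needed we may take $M\equiv1\pmod4$; this makes $M$ well defined, with $N$ well defined up to sign. Since $p\equiv1,9\pmod{20}$ is exactly the condition for the principal form of discriminant $-20$ to represent $p$, we also have $p=A^2+5B^2$. Reducing $M^2+N^2=p$ modulo $5$ and using $p\equiv\pm1\pmod5$ forces exactly one of $M,N$ to be divisible by $5$; since $p$ is odd, exactly one of $A,B$ is even. Thus every quantity in the statement is well defined, and it suffices, say for $p\equiv1\pmod{20}$, to prove that $5\mid M$ if and only if $A$ is even.

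Next I would make both of these conditions visible inside the field $K:=\Q(i,\sqrt5)$, which is the genus field, hence the Hilbert class field, of $\Q(\sqrt{-5})$ and has class number $1$. Write $\Gal(K/\Q)=\{1,\sigma,\tau,\sigma\tau\}$ where $\sigma$ fixes $\sqrt5$ and sends $i\mapsto-i$ while $\tau$ fixes $i$ and sends $\sqrt5\mapsto-\sqrt5$, so that $\Q(\sqrt5)=K^{\sigma}$, $\Q(i)=K^{\tau}$, and $\Q(\sqrt{-5})=K^{\sigma\tau}$ (using $\sqrt{-5}=i\sqrt5$). The prime $p$ splits completely in $K$ (it splits in $\Q(i)$ since $p\equiv1\pmod4$ and in $\Q(\sqrt5)$ since $p\equiv\pm1\pmod5$); fix a prime $\mathfrak P\mid p$ and a generator $\lambda$ of $\mathfrak P$ normalized so that $N_{K/\Q}(\lambda)=p$. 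Then $\pi:=\lambda\lambda^{\tau}\in\Z[i]$ has norm $p$, hence $\pi=u(M+Ni)$ for a unit $u\in\Z[i]$; $\alpha:=\lambda\lambda^{\sigma\tau}\in\Z[\sqrt{-5}]$ has norm $p$, hence $\alpha=\pm(A+B\sqrt{-5})$; and $\eps:=\lambda\lambda^{\sigma}$ generates the prime above $p$ in $\Z[\tfrac{1+\sqrt5}{2}]$ and has norm $p$. A one-line Galois computation (using $\lambda^{\tau}\lambda^{\sigma\tau}=\tau(\lambda\lambda^{\sigma})$) gives
\begin{equation*}
\pi\,\alpha \;=\; \lambda^{2}\,\eps^{\tau}.
\end{equation*}

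Now I would reduce this identity at the two relevant primes of $K$. Modulo the primes above $5$ one has $\sqrt5\equiv0$, hence $\sqrt{-5}\equiv0$ — precisely the coupling of the $5$-adic data to the factor $i$ in $\pi$ — so $\alpha\equiv A$ with $A^2\equiv p\pmod5$, and tracking $\pi\bmod5$ then determines whether $5\mid M$ or $5\mid N$. Modulo the (ramified) prime above $2$, where $\alpha\equiv A+B\sqrt{-5}$ displays the parity of $A$ and where $\pi$ is already pinned down by the normalization $M\equiv1\pmod4$, the same identity determines the parity of $A$ in terms of $\lambda$ and $\eps$. Eliminating $\lambda$ between the two reductions yields the equivalence. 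The case $p\equiv9\pmod{20}$ runs identically; the only change is that $p\equiv-1$ rather than $1$ modulo $5$, which flips the residue of $\lambda$ at the prime above $5$ and so interchanges the roles of $M$ and $N$, giving (b). (Equivalently, one can phrase the argument in terms of the Hecke Grossencharacters attached to the CM theta series underlying these identities, comparing their base changes to $K$.)

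I expect the real difficulty to be the $2$-adic bookkeeping. The normalization determines $\pi$ only up to conjugation and $\alpha$ only up to sign, and $2$ ramifies in $\Q(\sqrt{-5})$ with the prime $(2,1+\sqrt{-5})$ \emph{non-principal} — which is the reason one is forced to pass to $K$ at all — so ``$\alpha\bmod2$'' is not directly accessible; moreover the real fundamental unit $\tfrac{1+\sqrt5}{2}$ enters $\eps$ to an a priori unknown power, whose parity one must control. Pinning that parity down is exactly a computation with the $2$-part of the relevant Grossencharacter; equivalently, one may substitute Dirichlet's method of counting solutions of the pertinent congruences, or an evaluation of a Jacobi sum of order dividing $8$. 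By contrast the ``which of $M,N$'' step at $5$ is essentially immediate once the displayed identity is in hand.
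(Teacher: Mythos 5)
Your setup in $K=\Q(i,\sqrt5)$ is sound: $K$ is indeed the Hilbert class field of $\Q(\sqrt{-5})$, the elements $\pi=\lambda\lambda^{\tau}$, $\alpha=\lambda\lambda^{\sigma\tau}$, $\eps=\lambda\lambda^{\sigma}$ are as you say, and the identity $\pi\alpha=\lambda^{2}\eps^{\tau}$ is correct. But the proof has a genuine gap exactly where you flag it, and that gap is the whole theorem. After the two reductions, the condition ``$5\mid M$ versus $5\mid N$'' becomes the question whether $\pi\equiv\pm\pi^{\sigma}$, i.e.\ whether $(\lambda/\lambda^{\sigma})^{2}\equiv\pm1$, modulo a prime $\mathfrak{l}$ of $K$ above $5$, while the parity of $A$ becomes the question whether $\lambda^{2}\eps^{\tau}\equiv 1$ or $\sqrt{-5}$ modulo $2$ (not modulo the ramified prime above $2$ itself: there the residue field is $\mathbb{F}_2$ resp.\ $\mathbb{F}_4$ and $\alpha\equiv A+B\equiv1$ always, so nothing is displayed; you must work modulo the square of that prime). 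These are two residue conditions on the same global element $\lambda$ (equivalently on $\beta=\pi\alpha=p\,\lambda/\lambda^{\sigma}$), and ``eliminating $\lambda$'' between them is not a formal step: $\lambda$ is determined only up to the units $i^{a}\omega^{b}$, $\omega=(1+\sqrt5)/2$, and correlating its residue at $2$ with its residue at $\mathfrak{l}$ — including the parity of the exponent of $\omega$ entering $\eps$ — is precisely the quartic, non-genus datum that distinguishes $x^2+20y^2$ from $4x^2+5y^2$ and $x^2+100y^2$ from its genus-mate. Indeed, unwinding your identity at the two places just reproduces the original question (correlate $\alpha\bmod 2$ with $\pi\bmod 5$); the needed arithmetic input must come from the Grossencharacter/2-unit computation, Dirichlet's counting, or a Jacobi-sum evaluation, and you name these three routes without carrying out any of them. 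Consequently statements (a) and (b) — and in particular the sign flip between $p\equiv1$ and $p\equiv9\pmod{20}$, which you attribute to the residue of $\lambda$ at $\mathfrak{l}$ but never verify — are not actually derived.

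For contrast, the paper's proof needs none of this machinery: the identity $J_{1,5}J_{2,5}=J_{1}J_{5}$ is a simple product rearrangement, and it is interpreted as an equality of signed counts of representations, $p=x^2+y^2$ ($x$ odd) weighted by the class of $x+3y\bmod 5$ against $p=x^2+5y^2$ weighted by the parity of $x$. For a prime $p\equiv1,9\pmod{20}$ each side consists of a single representation up to the sign of $y$, so the two signs must agree, and reading off which congruence classes occur for $p\equiv1$ versus $p\equiv9\pmod{20}$ gives (a) and (b) directly. If you want to salvage your approach, the missing reciprocity step at $2$ must be made explicit (e.g.\ by determining $\eps$ modulo $4$ up to squares of units, or by identifying the relevant ring class fields inside $K(\sqrt{\omega}, \zeta)$-type extensions as Brink does); as written, the argument stops exactly at the point where the theorem's content lies.
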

\begin{theorem}\cite[Theorem 3]{Mu}\label{theorem:Mu-Thm3}  Let $p=M^2+7N^2$ be a prime $\equiv \pm 1 \pmod {8}$ with $M$ or $N\equiv 1 \pmod {4}$.  Then
\begin{itemize}
\item[(a)]  $p=A^2+14B^2$ is solvable if and only if $2p+M+N\equiv 3 \pmod 8$,
\item[(b)]  $p=7C^2+2D^2$ is solvable if and only if $2p+M+N\equiv 7 \pmod 8$.
\end{itemize}
\end{theorem}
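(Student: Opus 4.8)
The plan is to recast the assertion as the computation of a single Fourier coefficient of a weight-one theta series, and to extract that coefficient from a threefield identity. Put
\[
T_{-28}(q)=\sum_{m,n\in\Z}q^{m^2+7n^2},\qquad T_1(q)=\sum_{m,n\in\Z}q^{m^2+14n^2},\qquad T_2(q)=\sum_{m,n\in\Z}q^{2m^2+7n^2},
\]
so $T_{-28}$ is the theta series of the unique reduced form of discriminant $-28$, while $T_1,T_2$ are the theta series of $x^2+14y^2$ and $2x^2+7y^2$, the two forms filling out the principal genus of discriminant $-56$. The first step is the elementary remark that a prime $p$ satisfying the hypotheses lies in that principal genus: writing $p=M^2+7N^2$ forces $-7$ to be a square modulo $p$, so the assigned character $\leg{-7}{p}=1$, while $p\equiv\pm1\pmod 8$ makes the other assigned character $\leg{8}{p}=1$ as well. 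Hence $p$ lies in the principal genus and is therefore represented by exactly one of $x^2+14y^2$ and $2x^2+7y^2$, so the coefficient of $q^p$ in $T_1(q)-T_2(q)$ is $+4$ if $p=A^2+14B^2$ is solvable and $-4$ if $p=7C^2+2D^2$ is solvable. It therefore suffices to determine the sign of this coefficient.

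The core of the argument is to produce, by manipulating products of the Jacobi-triple-product pieces $J_{a,m}$ of Section~1, a threefield identity of the shape
\[
T_1(q)-T_2(q)=\sum_{m,n\in\Z}\varepsilon(m,n)\,q^{m^2+7n^2},
\]
where $\varepsilon(m,n)$ is an explicit sign depending only on $m,n$ modulo $8$; one expects $\varepsilon(m,n)=(-1)^{n}\leg{2}{m+n}$ (read as $0$ when $m+n$ is even) to do the job. The name ``threefield'' is apt: the left side is governed by the arithmetic of $\Q(\sqrt{-14})$, the right side by that of $\Q(\sqrt{-7})$, and the quadratic twist $\leg{2}{\cdot}$ is the imprint of the third field $\Q(\sqrt{2})$ — the three quadratic subfields of the genus field of discriminant $-56$. (Modularly this says that the weight-one CM form $T_1-T_2$, attached to an order-four character of $\Cl(-56)$, is a $\sqrt{2}$-twist of $T_{-28}$.) Concretely one would split both sides dyadically, write each dyadic component as a quotient of $J_{a,m}$'s, and verify the resulting finite list of theta identities.

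Granting the identity, the remainder is routine. The representations of the odd prime $p$ by $x^2+7y^2$ are exactly $(\pm M,\pm N)$, and under the hypotheses the congruences in play (for instance $N\equiv 0\pmod 4$ when $p\equiv1\pmod 8$, and $M\equiv 0\pmod 4$ when $p\equiv7\pmod 8$) force $\leg{2}{m+n}$ to take the same value at all four of these points, so the coefficient of $q^p$ on the right is $4\,\varepsilon(M,N)$. Comparing with the first paragraph, $\varepsilon(M,N)=+1$ iff $p=A^2+14B^2$ and $\varepsilon(M,N)=-1$ iff $p=7C^2+2D^2$. It then remains to match $\varepsilon(M,N)$ with the stated congruence: $p=M^2+7N^2$ and $p\equiv\pm1\pmod 8$ force $M$ odd, $N\equiv0\pmod 4$ when $p\equiv1\pmod 8$, and $M\equiv0\pmod 4$, $N$ odd when $p\equiv7\pmod 8$; the normalisation that $M$ or $N\equiv1\pmod 4$ then pins down the sign of the odd variable (the sign of the even one being irrelevant modulo $8$); and running through the finitely many residues of $(M,N)$ modulo $8$ shows $2p+M+N\equiv 3$ or $7\pmod 8$ always, with $\varepsilon(M,N)=+1\iff 2p+M+N\equiv 3\pmod 8$ and $\varepsilon(M,N)=-1\iff 2p+M+N\equiv 7\pmod 8$.

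The real obstacle is the threefield identity of the second paragraph — guessing the correct sign function and proving the underlying theta-product identity; the genus-theoretic reduction and the modulo-$8$ bookkeeping are mechanical. As an independent check one can recover the sign from class field theory: the Hilbert class field of discriminant $-56$ is abelian of type $(2,2)$ over $\Q(\sqrt{-7})$, so the order-four class character of $\Q(\sqrt{-14})$ descends to a ray class character of $\Q(\sqrt{-7})$ whose value at the ideal $(M+N\sqrt{-7})$ can be computed directly — but the threefield derivation is the one consonant with the methods of this paper.
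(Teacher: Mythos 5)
Your overall strategy---compare the difference of the theta series of $x^2+14y^2$ and $2x^2+7y^2$ with a sign-weighted theta series attached to $x^2+7y^2$, then finish with genus theory and mod-$8$ bookkeeping---is in the same spirit as the paper's, and your genus-theoretic reduction and the analysis of the four points $(\pm M,\pm N)$ are sound. But the proposal has a genuine gap exactly where you place the weight of the argument: the threefield identity $T_1(q)-T_2(q)=\sum_{m,n}\varepsilon(m,n)q^{m^2+7n^2}$ is never proved, and even the weight is only guessed (``one expects $\varepsilon(m,n)=(-1)^n\leg{2}{m+n}$ to do the job''); ``split both sides dyadically and verify the resulting finite list of theta identities'' is a plan, not an argument. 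Since you yourself call this identity ``the real obstacle,'' and everything after it is mechanical, what remains established is only the routine part of the theorem. (For what it is worth, your guessed $\varepsilon$ does check out numerically on small primes, so the formulation is plausible---it is essentially a repackaging into a single character-twisted identity of the four identities the paper actually proves---but plausibility is not a proof, and producing the proof is the substance of the problem.)

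For comparison, the paper does precisely the work you defer, in a different packaging: it proves a master identity for $j(x;q)j(x;q^7)$ (Proposition \ref{proposition:master-prop}, obtained by the coefficient-extraction machinery of Theorem \ref{theorem:master-id-theorem}), specializes it in Corollary \ref{corollary:NewKapMu} to the four theta identities \eqref{equation:NewKap2-A}--\eqref{equation:Mu-Thm3B3-A}, interprets each side as a signed count of representations of $8k+1$, $56k+23$, $56k+71$, $56k+151$ by $x^2+14y^2$, $7x^2+2y^2$ and $x^2+7y^2$ in explicit residue classes, and only then carries out the mod-$112$ and mod-$8$ bookkeeping that you sketch. Your single identity, if verified, would subsume all four of these and treat $p\equiv 1$ and $p\equiv 7\pmod 8$ uniformly, which is an attractive feature; but verifying it requires theta-function computations of exactly the kind your sketch omits (e.g.\ an application of Theorem \ref{theorem:master-id-theorem} together with the splitting identity \eqref{equation:jsplit}, or a direct deduction from \eqref{equation:NewKap2-A}--\eqref{equation:Mu-Thm3B3-A}). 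Until that step is supplied, the proof is incomplete.
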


In this paper, we will prove our results by putting them into the context of threefield identities as found in Andrews, Dyson, and Hickerson \cite{ADH} and Cohen \cite{C}.  We quickly review the two types of threefield identities found in \cite{ADH}.  Let $D$ and $E$ be distinct squarefree integers not equal to $1$, and let $F$ be the squarefree part of $DE$.  There is then an identity between representations of odd integers $n$, for which the Jacobi symbols $(D/n)$, $(E/n)$, and $(F/n)$ are equal to $1$, by quadratic forms associated with the fields $\mathbb{Q}(\sqrt{D})$, $\mathbb{Q}(\sqrt{E})$, and $\mathbb{Q}(\sqrt{F})$.  The first type of identity comes from the case in which $D$, $E$, and $F$ are all positive, and the second type of identity comes from the case in which two of the integers, say $D$ and $E$, are negative and one, say $F$, is positive.  For the second case, the generating functions turn out to be theta functions, and the identity equates two theta functions, expressed in terms of $J$'s, and a Hecke-type sum whose weight system depends on which of the two angular regions of the plane is being summed over.  Examples of the second type will be the subject of this paper.  The generating functions for the first case are not theta functions; the identity equates three Hecke-type sums, whose weight systems do not depend on the angular regions.  An example of the second type would be the three ways of writing the function $\sigma(q)$ of \cite{ADH}.  Cohen \cite{C} reinterpreted the results of \cite{ADH} in terms of a $q$-identity for a certain Maass waveform.

We cover preliminaries on theta functions, Appell-Lerch sums, and Hecke-type double sums in Section \ref{section:Preliminaries}.  In Section \ref{section:Kap-Thm}, we give a new proof of Kaplansky's Theorem \ref{theorem:Kap-Thm} using the identity
\begin{equation*}
J_{1,4}J_{2,4}=J_{1,2}\overline{J}_{1,4}=\frac{1}{2}\Big ( \sum_{\substack{r-2s\ge 0\\r+2s\ge 0}}-\sum_{\substack{r-2s< 0\\r+2s<0}}\Big )(-1)^{r+s}q^{[(2r+1)^2-2(2s)^2-1]/8}.
\end{equation*}
Although we only need the first equality, the entire threefield identity is included so that the interested reader can see how it underlies the main result of Barrucand and Cohn \cite{BC}.  For the remainder of our results, we will omit discussion of the Hecke-type sums.  In Section \ref{section:B1-Mu1} we prove Brink's Theorem \ref{theorem:Brink-Thm1} and Muskat's Theorem \ref{theorem:Mu-Thm1} using
\begin{equation*}
J_{1,5}J_{2,5}=J_1J_5.
\end{equation*}
In Section \ref{section:NewKap}, we prove the new Kaplansky-like Theorem \ref{theorem:NewKap1} using the identity
\begin{equation*}
\overline{J}_{1,3}j(q;-q^3)=J_{6,12}\overline{J}_{2,6}.
\end{equation*}
In Section \ref{section:NewKap2-Mu3}, we prove the new Kaplansky-like Theorem \ref{theorem:NewKap2} using
\begin{align*}
q\overline{J}_{2,8}j(q^7;-q^7)+J_{1,4}J_{7,14}&=\overline{J}_{1,4}J_{14,28}
\end{align*}
 and prove Muskat's Theorem \ref{theorem:Mu-Thm3} using the above identity and three identities similar to it.

\section{Preliminaries}\label{section:Preliminaries}

We will frequently use the following identities without mention.  They easily follow from the definitions.
\begin{subequations}
\begin{equation*}
\overline{J}_{0,1}=2\overline{J}_{1,4}=\frac{2J_2^2}{J_1},  \overline{J}_{1,2}=\frac{J_2^5}{J_1^2J_4^2},   J_{1,2}=\frac{J_1^2}{J_2},  \overline{J}_{1,3}=\frac{J_2J_3^2}{J_1J_6},  J_{1,4}=\frac{J_1J_4}{J_2},
\end{equation*}
\begin{equation*}
J_{1,6}=\frac{J_1J_6^2}{J_2J_3},   \overline{J}_{1,6}=\frac{J_2^2J_3J_{12}}{J_1J_4J_6}.
\end{equation*}
\end{subequations}
Also following from the definitions are the following general identities \cite{HM}:
\begin{subequations}
\begin{equation}
j(q^n x;q)=(-1)^nq^{-\binom{n}{2}}x^{-n}j(x;q), \ \ n\in\mathbb{Z},\label{equation:1.8}
\end{equation}
\begin{equation}
j(x;q)=j(q/x;q)=-xj(x^{-1};q)\label{equation:1.7},
\end{equation}
\begin{equation}
j(-x;q)={J_{1,2}j(x^2;q^2)}/{j(x;q)} \ \ {\text{if $x$ is not an integral power of $q$,}}\label{equation:1.9}
\end{equation}
\begin{equation}
j(x;q)={J_1}j(x;q^n)j(qx;q^n)\cdots j(q^{n-1}x;q^n)/{J_n^n} \ \ {\text{if $n\ge 1$,}}\label{equation:1.10}
\end{equation}
\begin{equation}
j(x;-q)={j(x;q^2)j(-qx;q^2)}/{J_{1,4}},\label{equation:1.11}
\end{equation}
\begin{equation}
j(x^n;q^n)={J_n}j(x;q)j(\zeta_nx;q)\cdots j(\zeta_n^{n-1}x;q^n)/{J_1^n} \ \ {\text{if $n\ge 1$, $\zeta_n$ $n$-th primitive root,}}\label{equation:1.12}
\end{equation}
\begin{equation}
j(z;q)=\sum_{k=0}^{m-1}(-1)^k q^{\binom{k}{2}}z^k
j\big ((-1)^{m+1}q^{\binom{m}{2}+mk}z^m;q^{m^2}\big ),\label{equation:jsplit}
\end{equation}
\end{subequations}

\noindent where $z$ is not an integral power of $q$.

More useful theta functions identities are, see for example \cite{HM}:
\begin{proposition}   For generic $x,y,z\in \mathbb{C}^*$ 
\begin{subequations}
\begin{equation}
j(qx^3;q^3)+xj(q^2x^3;q^3)=j(-x;q)j(qx^2;q^2)/J_2={J_1j(x^2;q)}/{j(x;q)},\label{equation:H1Thm1.0}
\end{equation}
\begin{equation}
j(x;q)j(y;q)=j(-xy;q^2)j(-qx^{-1}y;q^2)-xj(-qxy;q^2)j(-x^{-1}y;q^2),\label{equation:H1Thm1.1}
\end{equation}
\begin{equation}
j(-x;q)j(y;q)-j(x;q)j(-y;q)=2xj(x^{-1}y;q^2)j(qxy;q^2),\label{equation:H1Thm1.2A}
\end{equation}
\begin{equation}
j(-x;q)j(y;q)+j(x;q)j(-y;q)=2j(xy;q^2)j(qx^{-1}y;q^2),\label{equation:H1Thm1.2B}
\end{equation}
\begin{equation}
j(x;q)j(y;q^n)=\sum_{k=0}^n(-1)^kq^{\binom{k}{2}}x^kj\big ((-1)^nq^{\binom{n}{2}+kn}x^ny;q^{n(n+1)}\big )j\big (-q^{1-k}x^{-1}y;q^{n+1} \big ).\label{equation:Thm1.3AH6}
\end{equation}
\end{subequations}
\end{proposition}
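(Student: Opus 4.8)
The plan is to deduce all of the displayed identities from the Jacobi triple product form \eqref{equation:JTPid} of $j(x;q)$ by elementary rearrangement of bilateral sums, organizing the argument around the master identity \eqref{equation:Thm1.3AH6}. For \eqref{equation:Thm1.3AH6} itself I would expand the left side as a double sum, $j(x;q)j(y;q^n)=\sum_{a,b\in\Z}(-1)^{a+b}q^{\binom{a}{2}+n\binom{b}{2}}x^ay^b$, and expand the right side by writing each of the two factors $j(\cdot;q^{n(n+1)})$ and $j(\cdot;q^{n+1})$ as its own bilateral sum, over indices $c$ and $d$ say. The two sides are then matched under the change of summation variables $a=k+nc-d$, $b=c+d$ with $k$ restricted to $0,\dots,n$: this is a bijection $\{(k,c,d):0\le k\le n,\ c,d\in\Z\}\to\Z^2$ because $a+b=k+(n+1)c$, so that $k$ and $c$ are precisely the remainder and quotient of $a+b$ on division by $n+1$, with $d=b-c$. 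It then remains to check that the powers of $x$, the powers of $y$, the signs, and the quadratic exponent of $q$ agree term by term; the first three are immediate from $a=k+nc-d$ and $b=c+d$, and the last is a polynomial identity in $k,c,d,n$ which one verifies by direct computation.

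From \eqref{equation:Thm1.3AH6} the next three identities follow cheaply. Setting $n=1$ leaves only the terms $k=0,1$, and since $\binom{1}{2}=0$ the sum collapses to exactly \eqref{equation:H1Thm1.1}. Replacing $x$ by $-x$ in \eqref{equation:H1Thm1.1}, and separately replacing $y$ by $-y$ in \eqref{equation:H1Thm1.1}, yields two identities for $j(-x;q)j(y;q)$ and $j(x;q)j(-y;q)$ whose right sides share the common term $j(xy;q^2)j(qx^{-1}y;q^2)$ and differ only in a term $\pm x\,j(qxy;q^2)j(x^{-1}y;q^2)$; subtracting gives \eqref{equation:H1Thm1.2A} and adding gives \eqref{equation:H1Thm1.2B}.

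For \eqref{equation:H1Thm1.0}, the second equality is a short infinite-product computation: by \eqref{equation:1.9} one has $j(-x;q)=J_{1,2}\,j(x^2;q^2)/j(x;q)$, and by \eqref{equation:1.10} with $n=2$ applied to $x^2$ one has $j(x^2;q)=J_1\,j(x^2;q^2)j(qx^2;q^2)/J_2^2$; substituting and using $J_{1,2}J_2=J_1^2$ turns $j(-x;q)j(qx^2;q^2)/J_2$ into $J_1\,j(x^2;q)/j(x;q)$. Given the second equality, the first equality is equivalent to $j(-x;q)j(qx^2;q^2)=J_2\bigl(j(qx^3;q^3)+xj(q^2x^3;q^3)\bigr)$, which is a form of the quintuple product identity. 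I would obtain it either by invoking that classical identity directly, or by checking that both sides satisfy the same $q$-difference equation $h(qx)=-q^{-1}x^{-3}h(x)$ (using \eqref{equation:1.8}) and then matching the finitely many remaining coefficients, or by clearing $j(x;q)$ and reducing to an expansion via \eqref{equation:Thm1.3AH6} with $n=3$.

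The one genuinely delicate step is the bookkeeping in the master identity \eqref{equation:Thm1.3AH6}: arranging the reindexing so that the three summation indices on the right (one finite, two bilateral) match bijectively with the two on the left, and then carrying out the quadratic-exponent verification without sign or exponent slips. Everything else is either a specialization or substitution in an already-established identity, or an elementary manipulation of the infinite products listed in Section~\ref{section:Preliminaries}.
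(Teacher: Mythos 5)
Your proposal is correct, but note that the paper itself gives no proof of this proposition: it is stated with a pointer to \cite{HM}, and identity \eqref{equation:H1Thm1.0} is simply labelled as the quintuple product identity. So your self-contained derivation is a genuinely different (more elementary) route than the paper's ``cite known results'' stance. The heart of your argument checks out: expanding both sides of \eqref{equation:Thm1.3AH6} by \eqref{equation:JTPid}, the substitution $a=k+nc-d$, $b=c+d$ is indeed a bijection from $\{0,\dots,n\}\times\Z^2$ to $\Z^2$ (with $k,c$ the remainder and quotient of $a+b$ modulo $n+1$ and $d=b-c$), the signs match because the total sign on the right is $(-1)^{k+(n+1)c}=(-1)^{a+b}$, and the quadratic exponents agree: both sides reduce to
\begin{equation*}
\binom{k}{2}+n(n+1)\binom{c}{2}+\binom{n}{2}c+knc+(n+1)\binom{d}{2}+(1-k)d,
\end{equation*}
absolute convergence justifying the rearrangement. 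Your specializations are also right: $n=1$ gives \eqref{equation:H1Thm1.1} since $\binom{1}{2}=0$, and the $x\to-x$, $y\to-y$ substitutions produce two expansions of $j(-x;q)j(y;q)$ and $j(x;q)j(-y;q)$ differing only in the term $\pm x\,j(qxy;q^2)j(x^{-1}y;q^2)$, so subtraction and addition give \eqref{equation:H1Thm1.2A} and \eqref{equation:H1Thm1.2B}. The second equality in \eqref{equation:H1Thm1.0} follows exactly as you say from \eqref{equation:1.9}, \eqref{equation:1.10} with $n=2$, and $J_{1,2}J_2=J_1^2$. For the first equality (the quintuple product), your functional-equation route does close: both sides satisfy $h(qx)=-q^{-1}x^{-3}h(x)$ by \eqref{equation:1.8}, so it suffices to match the coefficients of $x^0,x^1,x^2$, which for $j(-x;q)j(qx^2;q^2)$ are $j(q^4;q^6)=J_2$, $j(q^2;q^6)=J_2$, and $q\,j(1;q^6)=0$, matching $J_2\bigl(j(qx^3;q^3)+xj(q^2x^3;q^3)\bigr)$; alternatively, simply invoking the classical quintuple product is consistent with what the paper does. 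Your third suggested route for \eqref{equation:H1Thm1.0} (reducing via \eqref{equation:Thm1.3AH6} with $n=3$) is the only sketchy point, but since your other two routes are complete, the plan as a whole is sound.
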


\noindent Identity (\ref{equation:H1Thm1.0}) is the quintuple product identity.

 We will use the following definition of an Appell-Lerch sum.  
\begin{definition}  \label{definition:mdef} Let $x,z\in\mathbb{C}-{0}$ with neither $z$ nor $xz$ an integral power of $q$. Then
\begin{equation}
m(x,q,z):=\frac{1}{j(z;q)}\sum_r\frac{(-1)^rq^{\binom{r}{2}}z^r}{1-q^{r-1}xz}.\label{equation:H2.0}
\end{equation}
\end{definition}
\noindent These sums were first studied by Appell \cite{Ap} and then by Lerch \cite{L1}.  We will use the following definition of the building block of Hecke-type double sums and its basic properies \cite{HM}:  
\begin{definition} \label{definition:fabc-def}  Let $x,y\in\mathbb{C}-\{0\}$ and define $\sg (r):=1$ for $r\ge 0$ and $\sg(r):=-1$ for $r<0$. Then
\begin{equation*}
f_{a,b,c}(x,y,q):=\sum_{\substack{\sg (r)=\sg(s)}} \sg(r)(-1)^{r+s}x^ry^sq^{a\binom{r}{2}+brs+c\binom{s}{2}}.\\
\end{equation*}
\end{definition}

\begin{proposition} \label{proposition:fabc-prop} For $x,y\in\mathbb{C}-\{0\}$
\begin{align}
f_{a,b,c}(x,y,q)&=f_{a,b,c}(-x^2q^a,-y^2q^c,q^4)-xf_{a,b,c}(-x^2q^{3a},-y^2q^{c+2b},q^4)\label{equation:fabc-mod2}\\
&\ \ \ \ -yf_{a,b,c}(-x^2q^{a+2b},-y^2q^{3c},q^4)+xyq^bf_{a,b,c}(-x^2q^{3a+2b},-y^2q^{3c+2b},q^4).\notag\\
f_{a,b,c}(x,y,q)&=-\frac{q^{a+b+c}}{xy}f_{a,b,c}(q^{2a+b}/x,q^{2c+b}/y,q).\label{equation:H7eq1.14}
\end{align}
\end{proposition}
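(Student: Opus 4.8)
The plan is to derive both parts directly from the defining double sum in Definition \ref{definition:fabc-def}, by reindexing the set of lattice points $(r,s)$ with $\sg(r)=\sg(s)$; the only delicate issue is tracking how the sign weight $\sg(r)$ and the summation region transform under the reindexing.

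For (\ref{equation:fabc-mod2}) I would dissect the sum according to the parities of $r$ and $s$. Write $r=2R+\ve_1$ and $s=2S+\ve_2$ with $\ve_1,\ve_2\in\{0,1\}$, so that as $(R,S)$ ranges over $\Z^2$ and $(\ve_1,\ve_2)$ over $\{0,1\}^2$ we recover every $(r,s)\in\Z^2$ exactly once. The first thing to note is $\sg(2R+\ve)=\sg(R)$ for $\ve\in\{0,1\}$ (since $2R+\ve\ge 0\iff R\ge 0$), so the constraint $\sg(r)=\sg(s)$ becomes $\sg(R)=\sg(S)$ uniformly in all four pieces, and the weight $\sg(r)$ becomes $\sg(R)$. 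The second is the expansion
\[
a\binom{2R+\ve_1}{2}+b(2R+\ve_1)(2S+\ve_2)+c\binom{2S+\ve_2}{2}
=4\Big(a\binom{R}{2}+bRS+c\binom{S}{2}\Big)+L_{\ve_1,\ve_2}(R,S)+b\ve_1\ve_2,
\]
with $L_{\ve_1,\ve_2}(R,S)=\big(a(1+2\ve_1)+2b\ve_2\big)R+\big(2b\ve_1+c(1+2\ve_2)\big)S$, which comes from $\binom{2R+\ve_1}{2}=4\binom{R}{2}+(1+2\ve_1)R$. Substituting, and pulling the factor $(-1)^{\ve_1+\ve_2}x^{\ve_1}y^{\ve_2}q^{b\ve_1\ve_2}$ out of the $(\ve_1,\ve_2)$-piece, each piece becomes a copy of $f_{a,b,c}(\,\cdot\,,\,\cdot\,,q^4)$ once the linear exponents from $L_{\ve_1,\ve_2}$ are absorbed into the base points and the sign $(-1)^{R+S}$ is absorbed into $-x^2$, $-y^2$. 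Reading off $L_{\ve_1,\ve_2}$ at the four values of $(\ve_1,\ve_2)$ then matches the four summands on the right of (\ref{equation:fabc-mod2}) term by term.

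For (\ref{equation:H7eq1.14}) I would apply the substitution $(r,s)\mapsto(-1-r,-1-s)$, an involution of $\Z^2$ that carries $\{r\ge 0,\ s\ge 0\}$ bijectively onto $\{r<0,\ s<0\}$ and back, and that flips the weight $\sg(r)$ to $-\sg(r)$; this flip is exactly the source of the leading minus sign in (\ref{equation:H7eq1.14}). The computation needed is
\[
a\binom{-1-r}{2}+b(-1-r)(-1-s)+c\binom{-1-s}{2}
=a\binom{r}{2}+brs+c\binom{s}{2}+(a+b+c)+(2a+b)r+(2c+b)s,
\]
using $\binom{-1-r}{2}=\binom{r}{2}+2r+1$ and $(-1-r)(-1-s)=rs+r+s+1$, together with $x^{-1-r}y^{-1-s}=x^{-1}y^{-1}(x^{-1})^r(y^{-1})^s$ and $(-1)^{(-1-r)+(-1-s)}=(-1)^{r+s}$. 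Factoring out the constant $q^{a+b+c}/(xy)$ and folding $q^{(2a+b)r}$ and $q^{(2c+b)s}$ into the arguments, the transform of the $\{r,s\ge 0\}$-part of $f_{a,b,c}(x,y,q)$ becomes $q^{a+b+c}/(xy)$ times the $\{r,s<0\}$-part of $f_{a,b,c}(q^{2a+b}/x,q^{2c+b}/y,q)$, and symmetrically with the two regions swapped; taking the signed difference of the two regions yields (\ref{equation:H7eq1.14}).

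Neither identity conceals a real difficulty: the content is bookkeeping. The step I expect to require the most care is verifying that these reindexings respect the sign data --- that $\sg(2R+\ve)=\sg(R)$ so the parity dissection stays inside the region $\sg(r)=\sg(s)$, and that $(r,s)\mapsto(-1-r,-1-s)$ genuinely interchanges the two sign-regions (so the overall factor is exactly $-1$ with no stray boundary contribution) --- as opposed to treating these as purely formal manipulations of the summand.
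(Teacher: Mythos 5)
Your proposal is correct: the parity dissection $r=2R+\ve_1$, $s=2S+\ve_2$ with the checks $\sg(2R+\ve)=\sg(R)$ and $(-1)^{r+s}=(-1)^{\ve_1+\ve_2}$ gives (\ref{equation:fabc-mod2}) term by term, and the involution $(r,s)\mapsto(-1-r,-1-s)$, which swaps the two regions and flips $\sg$, gives (\ref{equation:H7eq1.14}) exactly as you compute. The paper itself states Proposition \ref{proposition:fabc-prop} without proof, citing \cite{HM}, and your argument is essentially the standard elementary verification used there, so there is nothing to add.
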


To relate Hecke-type double sums to Appell-Lerch sums and theta functions, we use the $n=1$ specialization of Theorem $0.9$ of [HM]:
\begin{proposition} \label{proposition:f131}For generic $x,y\in \mathbb{C} - \{0\}$
\begin{align}
f_{1,3,1}(x,y,q)=j(y;q)&m(-q^5x/y^3,q^8,y/x)+j(x;q)m(-q^5y/x^3,q^8,x/y)\label{equation:f131}\\
&\ \ \ \ -\frac{qxyJ_{2,4}J_{8,16}j(q^{3}xy;q^8)j(q^{14}x^{2}y^{2};q^{16})}
{j(-q^{3}x^2;q^8)j(-q^{3}y^2;q^8)}.\notag
\end{align}
\end{proposition}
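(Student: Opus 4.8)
The identity is Theorem~$0.9$ of \cite{HM} specialized to $n=1$, so the shortest route is to transcribe that theorem, set $n=1$ --- which turns the Hecke form there into $f_{1,3,1}$, makes the Appell--Lerch modulus $q^{8}$, and produces the $q^{16}$-data visible in the statement --- and simplify the resulting $J$-products using the elementary identities at the start of this section together with \eqref{equation:1.8}--\eqref{equation:jsplit}. More instructively, here is how I would prove the $n=1$ case directly, by the method \cite{HM} use for the general decomposition of indefinite Hecke-type double sums.

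The building block $f_{1,3,1}(x,y,q)$ (Definition~\ref{definition:fabc-def}) is a Hecke-type series attached to the indefinite form $r^{2}+6rs+s^{2}$, and such series collapse to Appell--Lerch sums plus a pure theta remainder. First I would fold the cone $\sg(r)=\sg(s)<0$ onto $r,s\ge 0$ via $(r,s)\mapsto(-1-r,-1-s)$ --- using $\binom{-1-r}{2}=\binom{r}{2}+2r+1$, its $s$-analogue, and $3(-1-r)(-1-s)=3rs+3(r+s)+3$, which together shift the exponent by $5(r+s+1)$ --- writing $f_{1,3,1}(x,y,q)$ as a difference of two quarter-plane ($r,s\ge 0$) sums; this is also the source of the inversion \eqref{equation:H7eq1.14}, which here reads $f_{1,3,1}(x,y,q)=-q^{5}(xy)^{-1}f_{1,3,1}(q^{5}/x,q^{5}/y,q)$. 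In each quarter-plane sum I would then sum over the inner variable against the Jacobi triple product \eqref{equation:JTPid}, using $\binom{r}{2}+3rs=\binom{r+3s}{2}-\binom{3s}{2}$ to complete $\sum_{r\ge 0}(-1)^{r}x^{r}q^{\binom{r}{2}+3rs}$ to $j(x;q)$ minus a partial theta tail. Recombining the two folded sums, the $j(x;q)$- and $j(y;q)$-multiples survive while the partial-theta tails combine into a sum which, on interchanging the order of summation, is recognizable by Definition~\ref{definition:mdef} as $j(z;q^{8})m(w,q^{8},z)$ for the relevant $w,z$; carrying this out in the two symmetric ways (matching the $x\leftrightarrow y$ symmetry of both sides) produces $j(y;q)m(-q^{5}x/y^{3},q^{8},y/x)+j(x;q)m(-q^{5}y/x^{3},q^{8},x/y)$, and the leftover boundary terms --- rearranged with \eqref{equation:1.8}, \eqref{equation:1.9}, \eqref{equation:jsplit} and the quintuple product \eqref{equation:H1Thm1.0} --- have to be shown to collapse to $-qxy\,J_{2,4}J_{8,16}\,j(q^{3}xy;q^{8})j(q^{14}x^{2}y^{2};q^{16})/\bigl(j(-q^{3}x^{2};q^{8})j(-q^{3}y^{2};q^{8})\bigr)$.

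A cleaner-looking but equally calculational alternative is an elliptic-function uniqueness argument: as functions of $x$ (with $y,q$ fixed) both sides satisfy the same functional equations --- those from \eqref{equation:H7eq1.14} and from shifting $x$ in the defining series, matched on the right by the standard shift- and $z$-changing formulas for $m$ together with the quasi-periodicity of $j$ --- so their difference is a holomorphic elliptic function of $x$ (the potential poles from $j(-q^{3}x^{2};q^{8})^{-1}$ cancelling against poles of the $m$-terms), hence constant, hence $0$ by evaluation at a point where the $m$'s degenerate to products of $j$'s. Either way the real obstacle is bookkeeping rather than ideas: in the direct route, verifying that the boundary terms assemble into \emph{precisely} the stated theta quotient demands faithful tracking of powers of $q$ through \eqref{equation:1.8}--\eqref{equation:jsplit} and \eqref{equation:H1Thm1.0}--\eqref{equation:Thm1.3AH6}, and --- crucially --- pinning down the $z$-argument of each $m(\,\cdot\,,q^{8},z)$, since replacing $z$ by $q^{8}z$ (or another admissible value) alters $m$ by a theta quotient, so the ``same'' Appell--Lerch sum has many presentations and only one normalization makes the residual term come out as written; in the uniqueness route, the analogous delicate point is checking the pole cancellation with the correct residues. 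Once the normalization is fixed, the identity follows.
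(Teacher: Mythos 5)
Your primary route coincides with the paper's own treatment: the paper offers no proof of Proposition \ref{proposition:f131} at all, stating it simply as the $n=1$ specialization of Theorem 0.9 of \cite{HM}, which is exactly the citation you lead with (and your bookkeeping checks, e.g.\ the exponent shift $5(r+s+1)$ under $(r,s)\mapsto(-1-r,-1-s)$ and the specialization $f_{1,3,1}(x,y,q)=-q^{5}(xy)^{-1}f_{1,3,1}(q^{5}/x,q^{5}/y,q)$ of \eqref{equation:H7eq1.14}, are consistent). Your two supplementary sketches --- cone-folding with partial-theta tails, or the elliptic-function uniqueness argument --- are plausible outlines of how the general theorem is established in \cite{HM}, but they go beyond anything this paper attempts and are not needed to match its argument.
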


\begin{theorem}\label{theorem:master-id-theorem} Suppose that $x$, $q$, $A,$ and $B$ are nonzero complex numbers with $|q|<1$, that $r$, $t$, $m$, $n$, $a$, and $b$ are integers with $m$ and $n$ positive and $ra+tb=1,$ and that $M$ is a positive integer divisible by $(mt^2+nr^2)/\textup{gcd}(mt,nr)$.  Then
\begin{align}
j(Ax^r,q^m)j(Bx^t,q^n)&=\sum_{i=0}^{M-1}x^i(-A)^{ai}(-B)^{bi}q^{m\binom{ai}{2}+n\binom{bi}{2}}\label{equation:master-id-theorem}\\
&\ \ \ \ \ \cdot j(-(-A)^t(-B)^{-r}q^{m\binom{t}{2}+n\binom{r+1}{2}+(mat-nbr)i};q^{mt^2+nr^2})\notag\\
&\ \ \ \ \ \cdot j(-(-A)^{\frac{nrM}{mt^2+nr^2}}(-B)^{\frac{mtM}{mt^2+nr^2}}x^Mq^{\frac{mn(2i+M-r-t)M}{2(mt^2+nr^2)}};q^{\frac{mnM^2}{mt^2+nr^2}}).\notag
\end{align}
\end{theorem}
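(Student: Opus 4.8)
The plan is to expand the left-hand side by the Jacobi triple product \eqref{equation:JTPid}, reindex the resulting double sum through the unimodular change of variables on $\Z^2$ forced by the relation $ra+tb=1$, split the summation index modulo $M$, and then complete a single square; the two inner sums that survive are exactly the two $j$-functions on the right of \eqref{equation:master-id-theorem}.

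Concretely, \eqref{equation:JTPid} gives
\[
j(Ax^r;q^m)\,j(Bx^t;q^n)=\sum_{u,v\in\Z}(-1)^{u+v}q^{m\binom u2+n\binom v2}A^uB^vx^{ru+tv}.
\]
Since $ra+tb=1$ forces $\gcd(r,t)=1$, the matrix $\kzxz{a}{t}{b}{-r}$ has determinant $-1$, so $u=ak+t\ell'$, $v=bk-r\ell'$ (equivalently $k=ru+tv$, $\ell'=bu-av$) is a bijection of $\Z^2$ under which the exponent of $x$ becomes simply $k$. Substituting and writing $k=i+M\ell$ with $0\le i\le M-1$ and $\ell\in\Z$ puts the left-hand side into the shape $\sum_{i=0}^{M-1}x^i\sum_{\ell,\ell'\in\Z}(\cdots)$, matching the outer structure of \eqref{equation:master-id-theorem}.

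In the variables $(k,\ell')$ the quadratic part of the exponent of $q$ is the form $Tk^2+2Sk\ell'+P\ell'^2$ with $T:=ma^2+nb^2$, $S:=mat-nbr$, $P:=mt^2+nr^2$, and since the change of basis is unimodular one gets $TP-S^2=mn$. After substituting $k=i+M\ell$ the only coupling between $\ell$ and $\ell'$ is the cross term $SM\ell\ell'$, which is killed by the substitution $\ell'\mapsto\ell'+\tfrac{SM}{P}\ell$; this is legitimate over $\Z$ precisely because $M$ is divisible by $P/\gcd(mt,nr)$: with $g:=\gcd(mt,nr)$ one has $g\mid S=a(mt)-b(nr)$, hence $P\mid SM$. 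The same hypothesis makes $nrM/P$, $mtM/P$ and (via $TP-S^2=mn$) $mnM^2/P$ integers, so every exponent on the right of \eqref{equation:master-id-theorem} is well defined. Once the cross term is removed the coefficient of $\binom{\ell}{2}$ in the exponent of $q$ equals $\tfrac{M^2}{P}(TP-S^2)=mnM^2/P$, and the sum over $(\ell,\ell')$ factors: for each $i$ the $\ell'$-sum is, by \eqref{equation:JTPid} again, a theta function of modulus $q^{mt^2+nr^2}$ and the $\ell$-sum a theta function of modulus $q^{mnM^2/(mt^2+nr^2)}$.

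What remains is bookkeeping with the linear terms and the signs. The part of the exponent of $q$ and of $(-1)^{u+v}A^uB^v$ independent of $\ell$ and $\ell'$ yields the prefactor $x^i(-A)^{ai}(-B)^{bi}q^{m\binom{ai}{2}+n\binom{bi}{2}}$, using $m\binom{ai}{2}+n\binom{bi}{2}=\tfrac12(Ti^2-(ma+nb)i)$; the $\ell'$-linear data, with $m\binom t2+n\binom{r+1}2=\tfrac12(P-mt+nr)$ and $-(-A)^t(-B)^{-r}=(-1)^{t-r-1}A^tB^{-r}$, produces the first theta function $j\bigl(-(-A)^t(-B)^{-r}q^{m\binom t2+n\binom{r+1}2+(mat-nbr)i};q^{mt^2+nr^2}\bigr)$; and the $\ell$-linear data gives the second, where the identities $aP-tS=nr$, $bP+rS=mt$, and $(ma+nb)P-(mt-nr)S=mn(r+t)$ — all immediate from $ra+tb=1$ — pin down both the argument $-(-A)^{nrM/P}(-B)^{mtM/P}x^Mq^{mn(2i+M-r-t)M/(2P)}$ and the sign in front of it. The main obstacle is purely one of care: carrying the sign $(-1)^{u+v}=(-1)^{(a+b)k+(t-r)\ell'}$ and the powers of $A$ and $B$ through the change of variables and through the completion of the square without slip. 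The arithmetic hypothesis on $M$ enters exactly once, to guarantee that $SM/P$ is an integer.
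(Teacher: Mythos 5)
Your proposal is correct and takes essentially the same route as the paper's proof: expand both factors by the Jacobi triple product, reindex so that the exponent of $x$ becomes a single integer variable, split that variable into residues mod $M$, and resum the two decoupled lattice directions into the two $j$-factors on the right, with the hypothesis on $M$ entering exactly to make $(mat-nbr)M/(mt^2+nr^2)$ an integer. Your explicit integer shear in $\ell'$ is the same manipulation the paper performs via the quasi-periodicity of $j$ when it computes the ratio $T(i+kM)/T(i)$, so the two arguments differ only in packaging.
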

\begin{proof}[Proof of Theorem \ref{theorem:master-id-theorem}] Denote by $f(x)$ the left-hand side of (\ref{equation:master-id-theorem}).  We assume that gcd$(r,t)=1$, if not, we can substitute $x\rightarrow x^{\gcd(r,t)}$ into the left-hand side of (\ref{equation:master-id-theorem}). Let $a$ and $b$ be integers such that
\begin{equation}
ra+tb=1.\label{equation:proof-step1}
\end{equation}
First we determine the coefficient of $x^i$ in $f(x)$ for all $i$.  We have 
\begin{align}
f(x)&=\sum_{k}(-Ax^r)^kq^{m\binom{k}{2}}\sum_{\ell}(-Bx^t)^{\ell}q^{n\binom{\ell}{2}}\label{equation:proof-step2}\\
&=\sum_{k,\ell}(-A)^k(-B)^{\ell}x^{rk+t\ell}q^{m\binom{k}{2}+n\binom{\ell}{2}}.\notag
\end{align}
To find the coefficient of $x^i$, we need to consider representations of $i$ in the form $rk+t\ell$.  One such representation is $i=r(ai)+t(bi)$; all others are obtained by adding a multiple of $t$ to $ai$ and subtracting the corresponding multiple of $r$ from $bi$.  I.e. we must have $k=ai+pt$ and $\ell=bi-pr$ for some integer $p.$  So 
\begin{align}
\textup{coefficient of $x^i$ in $f(x)$}&=\sum_p(-A)^{ai+pt}(-B)^{bi-pr}q^{m\binom{ai+pt}{2}+n\binom{bi-pr}{2}}\label{equation:proof-step3}\\
=(-A)^{ai}(-B)^{bi}&q^{m\binom{ai}{2}+n\binom{bi}{2}}j(-(-A)^t(-B)^{-r}q^{m\binom{t}{2}+n\binom{r+1}{2}+(mat-nbr)i};q^{mt^2+nr^2})\notag
\end{align}
Summing over $i$, we have
\begin{align}
f(x)&=\sum_ix^i(-A)^{ai}(-B)^{bi}q^{m\binom{ai}{2}+n\binom{bi}{2}}j(-(-A)^t(-B)^{-r}q^{m\binom{t}{2}+n\binom{r+1}{2}+(mat-nbr)i};q^{mt^2+nr^2})\notag\\
&=\sum_{i}T(i),\label{equation:proof-step4}
\end{align}
say.  Next we combine terms in this sum for which the $j$'s are related in the way that $j(x,q)$ and $j(q^kx,q)$ are related.  Changing $i$ by $M$ changes the exponent of $q$ in the first parameter of $j$ in (\ref{equation:proof-step4}) by $(mat-nbr)M$.  We need this change to be a multiple of $mt^2+nr^2$.  In other words, $M$ should be divisible by
\begin{align}
(mt^2+nr^2)/\text{gcd}(mt^2+nr^2,mat-nbr).\label{equation:proof-step5}
\end{align}
We can rewrite this without the $a$ and $b:$  since $\textup{gcd}(rmt)=1$,
\begin{align}
\textup{gcd}(m&t^2+nr^2,mat-nbr)\label{equation:proof-step6}\\
&=\text{gcd}(mt^2+nr^2,r(mat-nbr),t(mat-nbr))\notag\\
&=\text{gcd}(mt^2+nr^2,r(mat-nbr)+b(mt^2+nr^2),t(mat-nbr)-a(mt^2+nr^2))\notag\\
&=\text{gcd}(mt^2+nr^2,mart+mbt^2,-(nbrt+nar^2))\notag\\
&=\text{gcd}(mt^2+nr^2,mt(ar+bt),-nr(bt+ar))\notag\\
&=\text{gcd}(mt^2+nr^2,mt,nr)\notag\\
&=\text{gcd}(mt,nr).\notag
\end{align}
So we need:
\begin{equation}
M \textup{ is divisible by } (mt^2+nr^2)/\textup{gcd}(mt,nr).\label{equation:proof-step7}
\end{equation}
Then, for any integer $k$,
\begin{align}
T(i+kM)/T(i)=(-A)^{\frac{knrM}{mt^2+nr^2}}(-B)^{\frac{kmtM}{mt^2+nr^2}}x^{kM}q^{\frac{kmn(2i+kM-r-t)M}{2(mt^2+nr^2)}}.\label{equation:proof-step8}
\end{align}
Now we can rewrite the index $i$ in (\ref{equation:proof-step4}) as $i+kM$, where $i$ ranges through a complete residue system mod $M$ and $k$ ranges over all integers:
\begin{align}
f(x)&=\sum_{i=0}^{M-1}\sum_kT(i+kM)\label{equation:proof-step9}\\
&=\sum_{i=0}^{M-1}T(i)\sum_k (-A)^{\frac{knrM}{mt^2+nr^2}}(-B)^{\frac{kmtM}{mt^2+nr^2}}x^{kM}q^{\frac{kmn(2i+kM-r-t)M}{2(mt^2+nr^2)}}\notag\\
&=\sum_{i=0}^{M-1}T(i)j(-(-A)^{\frac{nrM}{mt^2+nr^2}}(-B)^{\frac{mtM}{mt^2+nr^2}}x^Mq^{\frac{mn(2i+M-r-t)M}{2(mt^2+nr^2)}};q^{\frac{mnM^2}{mt^2+nr^2}}).\notag
\end{align}
Finally, we substitute $T(i)$ from $(\ref{equation:proof-step4})$ into (\ref{equation:proof-step9}).
\end{proof}
\begin{proposition}\label{proposition:master-prop} We have
\begin{align}
j(x;q)j(x;q^7)=-\frac{q}{x}j(x;q^2) j(x^3;q^{14})-\frac{x}{q}j(qx;q^2) j(q^7x^3;q^{14})+\frac{x}{q}J_{1,2}j(q^7x^4;q^{14}).\label{equation:master-id}
\end{align}
\end{proposition}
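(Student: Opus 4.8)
The plan is to prove \eqref{equation:master-id} by comparing, for each $N\in\mathbb Z$, the coefficient of $x^N$ on the two sides. This converts the two‑variable identity into a family of scalar identities indexed by $N$, each of which will reduce to an instance of the splitting formula \eqref{equation:jsplit}.

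\textbf{Extracting the coefficients.} First I would compute $[x^N]$ of the left side. From $j(x;q)j(x;q^7)=\sum_{k,\ell}(-1)^{k+\ell}q^{\binom k2+7\binom\ell2}x^{k+\ell}$ together with the elementary identity $\binom k2+7\binom\ell2=\binom N2+8\binom\ell2+(1-N)\ell$, valid whenever $k+\ell=N$, the inner sum over $\ell$ collapses to a single theta function:
\[
[x^N]\,j(x;q)j(x;q^7)=(-1)^N q^{\binom N2}\,j(-q^{1-N};q^8).
\]
(This is also what Theorem \ref{theorem:master-id-theorem} gives, with $A=B=1$, $m=1$, $n=7$, $r=t=1$, $a=1$, $b=0$, $M=8$, read one coefficient at a time.) Next I would do the same for the three terms on the right. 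In the first two, each theta factor is geometric in $x$ once its modulus is fixed, so collecting $x^M$ in $j(x;q^2)j(x^3;q^{14})$ leaves a sum over the $q^{14}$‑index that is itself a theta function of modulus $q^{32}$; a routine exponent computation gives
\[
[x^N]\Bigl(-\tfrac qx\, j(x;q^2)j(x^3;q^{14})\Bigr)=(-1)^N q^{N^2+N+1}j(-q^{6-6N};q^{32}),
\]
\[
[x^N]\Bigl(-\tfrac xq\, j(qx;q^2)j(q^7x^3;q^{14})\Bigr)=(-1)^N q^{N^2-2N}j(-q^{22-6N};q^{32}),
\]
while the third term contributes only to exponents $N\equiv 1\pmod 4$, where $[x^N]\bigl(\tfrac xq\,J_{1,2}\,j(q^7x^4;q^{14})\bigr)=(-1)^{(N-1)/4}q^{7((N-1)/4)^2-1}J_{1,2}$.

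\textbf{The reduction.} It then remains to check, for every $N$, the scalar identity
\[
q^{\binom N2}j(-q^{1-N};q^8)=q^{N^2+N+1}j(-q^{6-6N};q^{32})+q^{N^2-2N}j(-q^{22-6N};q^{32})+(-1)^N\cdot\bigl(\text{the }J_{1,2}\text{-term above, when }N\equiv1\pmod4\bigr).
\]
I would attack the left side with the $m=2$ case of \eqref{equation:jsplit}, which writes $j(-q^{1-N};q^8)$ as a combination of two theta functions of modulus $q^{32}$, and then bring every $q^{32}$‑theta to a standard argument using \eqref{equation:1.7} and \eqref{equation:1.8}. For $N\not\equiv1\pmod4$ this matches the right side directly. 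For $N\equiv1\pmod4$ the relevant $q^{32}$‑components degenerate — for instance $j(-1;q^8)=2j(-q^8;q^{32})$ — and the leftover discrepancy is precisely the $4$‑dissection of $J_{1,2}=j(q;q^2)$, namely $J_{1,2}=j(-q^{16};q^{32})-2q\,j(-q^8;q^{32})+q^4 j(-1;q^{32})$, which is again a case of \eqref{equation:jsplit}. Since $j(-q^{1-N};q^8)$ depends on $N$ only modulo $8$, the $q^{32}$‑thetas only modulo $16$, and the last term only modulo $4$ (all up to explicit powers of $q$), this is a finite check; the reflection $j(x;q)j(x;q^7)=x^2j(x^{-1};q)j(x^{-1};q^7)$, i.e.\ $[x^N]=[x^{2-N}]$, roughly halves the number of cases.

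\textbf{Main obstacle.} The hard part will be this last step, and within it the residue classes $N\equiv1\pmod4$: one must recognize that the ``extra'' $J_{1,2}$ appearing in \eqref{equation:master-id} is exactly what repairs the degeneration of the two‑dissection of $j(-q^{1-N};q^8)$ on those classes. The remaining ingredients — the two coefficient extractions and the attendant exponent arithmetic — are routine applications of \eqref{equation:1.7}, \eqref{equation:1.8}, and \eqref{equation:jsplit}.
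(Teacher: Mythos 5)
Your proposal is correct and is essentially the paper's own argument in different bookkeeping: the paper applies Theorem \ref{theorem:master-id-theorem} to dissect both sides into the sixteen components $j(-q^{14e}x^{16};q^{224})$, $0\le e\le 15$ (which is exactly your coefficient extraction grouped by $N$ modulo $16$), and then kills each component with \eqref{equation:jsplit}. Indeed the paper's displayed sample verification, $j(-q;q^8)-qj(-q^6;q^{32})-j(-q^{22};q^{32})=0$, is precisely your reduced identity for the even residue classes, and your treatment of the classes $N\equiv 1\pmod 4$ via the $4$-dissection of $J_{1,2}$ matches the role of the third term there.
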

\begin{proof}[Proof of Proposition \ref{proposition:master-prop}] 
Four applications of Theorem \ref{theorem:master-id-theorem} change (\ref{equation:master-id}) into a sum of $52$ terms of the form
\begin{equation*}
q^ax^bj(\pm q^c;q^d)j(-q^{14e}x^{16};q^{224}),
\end{equation*}
for various integers $a$, $b$, $c$, $d$, and $e$.  We can apply (\ref{equation:1.8}) to reduce all of the $e$'s to the range $[0,15]$.  Then, for each $e$, the coefficient of
\begin{equation*}
j(-q^{14e}x^{16};q^{224})
\end{equation*}
is a sum of terms of the form
\begin{equation*}
q^ax^bj(\pm q^c;q^d),
\end{equation*}
and we can prove that it equals zero by some application of (\ref{equation:jsplit}).  For example, the coefficient of 
\begin{equation*}
j(-q^{14}x^{16};q^{224})
\end{equation*}
is
\begin{equation*}
q^{21}x^{-6}\Big [ j(-q;q^8)-qj(-q^6;q^{32})-j(-q^{22};q^{32})\Big ],
\end{equation*}
which by (\ref{equation:jsplit}) with $q\rightarrow q^8$,$x= -q$, and $n=2$ equals zero.
\end{proof}

\begin{corollary}\label{corollary:NewKapMu}  The following identities are true:
\begin{align}
q\overline{J}_{2,8}j(q^7;-q^7)+J_{1,4}J_{7,14}&=\overline{J}_{1,4}J_{14,28},\label{equation:NewKap2-A}\\
\overline{J}_{2,8}j(q^5;-q^7)+qJ_{1,4}J_{1,14}&=\overline{J}_{1,4}J_{10,28},\label{equation:Mu-Thm3B1-A}\\
\overline{J}_{2,8}j(q^3;-q^7)-J_{1,4}J_{5,14}&=q\overline{J}_{1,4}J_{6,28},\label{equation:Mu-Thm3B2-A}\\
\overline{J}_{2,8}j(q;-q^7)-J_{1,4}J_{3,14}&=q^2\overline{J}_{1,4}J_{2,28}.\label{equation:Mu-Thm3B3-A}
\end{align}
\end{corollary}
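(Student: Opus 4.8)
The plan is to derive all four identities in Corollary~\ref{corollary:NewKapMu} from the master identity (\ref{equation:master-id}) of Proposition~\ref{proposition:master-prop} by suitable specializations of $x$, followed by simplification of each resulting theta factor. Since (\ref{equation:master-id}) has the shape $j(x;q)j(x;q^7) = -\tfrac{q}{x}j(x;q^2)j(x^3;q^{14}) - \tfrac{x}{q}j(qx;q^2)j(q^7x^3;q^{14}) + \tfrac{x}{q}J_{1,2}j(q^7x^4;q^{14})$, the natural move is to replace $q$ by $q^2$ throughout, so that the ambient modulus becomes $q^2$ and $q^{14}$ becomes $q^{28}$, matching the $J_{14,28}$ and $J_{\ast,28}$ appearing on the right-hand sides of (\ref{equation:NewKap2-A})--(\ref{equation:Mu-Thm3B3-A}). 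After this substitution the left side reads $j(x;q^2)j(x;q^{14})$ and the right side involves $j(x;q^4)$, $j(x^3;q^{28})$, $j(q^2x;q^4)$, $j(q^{14}x^3;q^{28})$, and $J_{2,4}\,j(q^{14}x^4;q^{28})$.

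First I would handle (\ref{equation:NewKap2-A}). Setting $x = q^7$ (so that $x$ sits symmetrically with respect to the modulus-$14$ factor) should collapse things nicely: $j(q^7;q^{14}) = J_{7,14}$, $j(q^7;q^2)$ becomes a zero-or-unit-type factor that I would rewrite using (\ref{equation:1.8}), and the two modulus-$28$ theta functions $j(q^{21};q^{28})$ and $j(q^{35};q^{28})$ would be reduced via (\ref{equation:1.8}) and (\ref{equation:1.7}) to multiples of $J_{14,28}$ or to $\overline{J}$-type factors. The key is that $x = q^7$ makes $x^4 = q^{28}$, so the last term $J_{2,4}\,j(q^{14}\cdot q^{28};q^{28}) = J_{2,4}\,j(q^{42};q^{28})$, which by (\ref{equation:1.8}) is a monomial times $j(q^{14};q^{28}) = J_{14,28}$ — this is precisely where the $\overline{J}_{1,4}J_{14,28}$ on the right of (\ref{equation:NewKap2-A}) comes from (using $\overline{J}_{1,4} = J_{2,4}\cdot(\text{unit})$ or more precisely the listed simplification $\overline{J}_{1,4} = J_2^2/J_1$ combined with $J_{1,2} = J_1^2/J_2$). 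Meanwhile the left-hand product $j(q^7;q^2)j(q^7;q^{14})$ must be massaged into $q\,\overline{J}_{2,8}\,j(q^7;-q^7)$: here I would use (\ref{equation:1.11}) in the form $j(x;-q) = j(x;q^2)j(-qx;q^2)/J_{1,4}$ to recognize a $j(\cdot;-q^7)$ factor, and the $\overline{J}_{2,8}$ should emerge from $j(q^7;q^2)$ after applying (\ref{equation:1.8}) to shift the argument (since $q^7 = q^2\cdot q^5$, etc.) and (\ref{equation:1.9}) to pass between $j(x;q)$ and $\overline{J}$-notation.

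For the remaining three identities (\ref{equation:Mu-Thm3B1-A})--(\ref{equation:Mu-Thm3B3-A}), I expect the same substitution $q \mapsto q^2$ in (\ref{equation:master-id}) with the three further specializations $x = q^5$, $x = q^3$, and $x = q$ respectively. Each choice again makes the modulus-$14$ factor on the left into $J_{5,14}$, $J_{3,14}$, $J_{1,14}$ (up to units from (\ref{equation:1.8})), produces a $j(\cdot;-q^7)$ on the left via (\ref{equation:1.11}), and on the right yields $J_{1,4}$-times-$J_{\ast,28}$ terms after reducing the modulus-$28$ thetas. The sign and power-of-$q$ bookkeeping — tracking the $(-1)^n q^{-\binom{n}{2}} x^{-n}$ prefactors from (\ref{equation:1.8}) through several argument shifts, and the interplay between the $\overline{J}_{1,4}$, $\overline{J}_{2,8}$, $J_{1,2}$, $J_{1,4}$ normalizations — is the main obstacle; there is no conceptual difficulty, but one must be careful that, e.g., $j(q^{14}x^4;q^{28})$ with $x=q^3$ gives $j(q^{26};q^{28})$, which is $-q^{-?}$ times $j(q^2;q^{28}) = J_{2,28}$, and the resulting monomial in $q$ is exactly the $q^2$ appearing in front of $\overline{J}_{1,4}J_{2,28}$ in (\ref{equation:Mu-Thm3B3-A}). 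I would organize the computation by, for each of the four cases, writing out the three right-hand terms of the specialized (\ref{equation:master-id}), reducing each via (\ref{equation:1.8}) and (\ref{equation:1.7}) to the canonical $J$-form, and separately reducing the left-hand product via (\ref{equation:1.11}) and the list of simplifications at the start of Section~\ref{section:Preliminaries}; then match. The uniform structure across the four values $x \in \{q, q^3, q^5, q^7\}$ strongly suggests presenting (\ref{equation:NewKap2-A}) in detail and indicating that (\ref{equation:Mu-Thm3B1-A})--(\ref{equation:Mu-Thm3B3-A}) follow \emph{mutatis mutandis}.
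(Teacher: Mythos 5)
Your opening move is the same as the paper's: specialize the master identity (\ref{equation:master-id}) after $q\to q^2$ at the four odd powers of $q$ (the paper takes $x=-q^k$, $k=1,3,5,7$, and later replaces $q$ by $-q$, which is equivalent to your positive choice $x=q^k$). The gap is in how you plan to finish. The factor $j(\cdot\,;-q^7)$ in each target identity does \emph{not} come from the left-hand product: $j(q^k;q^2)\,j(q^k;q^{14})$ is just a monomial times $J_{1,2}J_{k,14}$, and after the whole identity is multiplied by $J_{1,4}/J_{1,2}=\overline{J}_{1,4}/J_{2,4}=\overline{J}_{2,8}/J_{1,4}$ it accounts for the $J_{1,4}J_{k,14}$ term (this global rescaling is also what turns $J_{2,4}J_{\ast,28}$ into $\overline{J}_{1,4}J_{\ast,28}$; your parenthetical ``$\overline{J}_{1,4}=J_{2,4}\cdot(\text{unit})$'' is false, their ratio is $J_4/J_1$). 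The $j(\cdot\,;-q^7)$ factor must instead be produced by \emph{adding} the first two right-hand terms of the specialized identity, i.e.\ the two modulus-$q^{28}$ thetas. For example, with $x=q$ the right side reduces (via (\ref{equation:1.8}), (\ref{equation:1.7})) to $-qJ_{1,4}J_{3,28}-q^{-1}J_{1,4}J_{11,28}+q^{-1}J_{2,4}J_{10,28}$, and to reach (\ref{equation:Mu-Thm3B1-A}) you need $J_{11,28}+q^2J_{3,28}=j(q^5;-q^7)$, which is (\ref{equation:jsplit}) with $m=2$ and base $-q^7$. This additive recombination is exactly the step the paper performs with (\ref{equation:jsplit}) (before its $q\to-q$ substitution), and it cannot be reached by the purely multiplicative tools you list: (\ref{equation:1.11}) writes $j(x;-q^7)$ as a \emph{product} $j(x;q^{14})j(-q^7x;q^{14})/J_{7,28}$, not as the needed two-term sum. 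Your case $x=q^7$ happens to hide this, because there the two modulus-$28$ terms coincide ($J_{21,28}$ and $J_{35,28}$ both reduce to $J_{7,28}$) and one only needs the product rearrangement $j(q^7;-q^7)=2J_{7,28}$; for $x=q,q^3,q^5$ the two terms are genuinely different thetas and the split identity is indispensable.

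Two smaller corrections: the matching of specializations to identities is permuted — $x=q$ yields (\ref{equation:Mu-Thm3B1-A}) (left factor $J_{1,14}$, with $j(q^5;-q^7)$), $x=q^3$ yields (\ref{equation:Mu-Thm3B3-A}), $x=q^5$ yields (\ref{equation:Mu-Thm3B2-A}), and $x=q^7$ yields (\ref{equation:NewKap2-A}); note in particular that the exponent in $j(q^a;-q^7)$ is not the $k$ you substituted. Also, your claim that the left-hand product ``must be massaged into $q\overline{J}_{2,8}j(q^7;-q^7)$'' cannot be right even numerically: that product is $-q^{-9}J_{1,2}J_{7,14}$, whose expansion begins at $q^{-9}$, while $q\overline{J}_{2,8}j(q^7;-q^7)$ begins at $2q$. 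With the missing step (\ref{equation:jsplit}) (with $m=2$) and the final multiplication by $J_{1,4}/J_{1,2}$ supplied, your route does close and is essentially the paper's proof in a slightly different order.
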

\begin{proof}[Proof of Corollary \ref{corollary:NewKapMu}]  We first replace $q$ by $q^2$ and $x$ by $-q^k$ in (\ref{equation:master-id}):
\begin{equation}
\overline{J}_{k,2}\overline{J}_{k,14}=q^{2-k}\overline{J}_{k,4}\overline{J}_{3k,28}+q^{k-2}\overline{J}_{k+2,4}\overline{J}_{3k+14,28}-q^{k-2}J_{2,4}J_{4k+14,28}.\label{equation:step1}
\end{equation}
When $k$ is odd,
\begin{equation}
\overline{J}_{k,4}=q^{-(k-1)(k-3)/8}\overline{J}_{1,4}.\label{equation:step2}
\end{equation}
We can apply this to the first two terms of the right side of (\ref{equation:step1}) and then combine them using (\ref{equation:jsplit}) with $m=2$ to obtain

\begin{equation}
\overline{J}_{k,2}\overline{J}_{k,14}=q^{-1}\overline{J}_{1,4}\overline{J}_{2k,7}-q^{k-2}J_{2,4}J_{4k+14,28}.\label{equation:step4}
\end{equation}
Now replace $q$ by $-q$:
\begin{equation}
J_{k,2}J_{k,14}=-q^{-1}J_{1,4}j(-q^{2k};-q^7)+q^{k-2}J_{2,4}J_{4k+14,28}.\label{equation:step5}
\end{equation}
When $k=1$, this becomes
\begin{equation}
J_{1,2}J_{1,14}=-q^{-1}J_{1,4}j(-q^{2};-q^7)+q^{-1}J_{2,4}J_{18,28}.\label{equation:step6}
\end{equation}
Multiply by $J_{1,4}/J_{1,2}=\overline{J}_{2,8}/J_{1,4}=\overline{J}_{1,4}/J_{2,4}$ and rearrange terms to get (\ref{equation:Mu-Thm3B1-A}).  Similarly, letting $k=3$ in (\ref{equation:step5}) gives (\ref{equation:Mu-Thm3B3-A}),  $k=5$ gives (\ref{equation:Mu-Thm3B2-A}),  and $k=7$ gives (\ref{equation:NewKap2-A}).
\end{proof}
%%%%%%%%%
%%%%%%%%%
%%%%%%%%%

%%%%%%%%%%%%%%%
%%%%%%%%%%%%%%%
%%%%%%%%%%%%%%%
\section{Proof of Theorem \ref{theorem:Kap-Thm}.}\label{section:Kap-Thm}
For this threefield identity, we have D=-1, E=-2, F=2.  We first claim the following for any given $k\ge 0$.  The excess of the number of inequivalent solutions of $8k+1=x^2+y^2$ ($x>0$ odd) in which
\begin{align*}
x\equiv \pm 1 \pmod{8}, \ y\equiv 0 \pmod{8}, {\text { or }} x\equiv \pm 3 \pmod{8}, \ y\equiv 4 \pmod{8}, 
\end{align*}
over those in which 
\begin{align*}
x\equiv \pm 3 \pmod{8}, \ y\equiv 0 \pmod{8}, {\text { or }} x\equiv \pm 1 \pmod{8}, \ y\equiv 4 \pmod{8},
\end{align*}
equals the excess of the number of inequivalent solutions of $8k+1=x^2+2y^2$ ($x>0$ odd) in which $y\equiv 0 \pmod{4}$ over those in which $y\equiv 2 \pmod{4}$ equals the excess of the number of inequivalent solutions of $8k+1=x^2-2y^2$  ($x>0$ odd, $-2x/4\le y < 2x/4$, see \cite[Lemma $3$]{ADH} with fundamental unit $3+2\sqrt{2}$) in which
\begin{align*}
&x\equiv 1 \pmod{4}, \ y\equiv 0 \pmod{4}, {\text { or }} x\equiv 3 \pmod{4}, \ y\equiv  2 \pmod{4}, 
\end{align*}
over those in which
\begin{align*}
&x\equiv 1 \pmod{4}, \ y\equiv  2 \pmod{4}, {\text { or }} x\equiv 3 \pmod{4}, \ y\equiv 0 \pmod{4}.
\end{align*}

We show that in terms of generating functions, this is equivalent to
\begin{equation}
J_{1,4}J_{2,4}=J_{1,2}\overline{J}_{1,4}=f_{1,3,1}(q^{3/4},-q^{3/4},-q^{1/2}).\label{equation:3field-1}
\end{equation}

\noindent The weighted set of solutions for $8k+1=x^2+y^2$ yields $J_{1,4}J_{2,4}$.  We immediately have
\begin{align*}
\sum_{r,s}&q^{\big ( (8r+1)^2+(8s)^2-1 \big )/8}+\sum_{r,s}q^{\big ( (8r+5)^2+(8s+4)^2-1 \big )/8} -\sum_{r,s}q^{\big ( (8r+5)^2+(8s)^2-1 \big )/8}\\
&\ \ \ \ \ \ \ \ \ \ -\sum_{r,s}q^{\big ( (8r+1)^2+(8s+4)^2-1 \big )/8}\\
&=\sum_{r,s}(-1)^sq^{\big ( (8r+1)^2+(4s)^2-1 \big )/8}-\sum_{r,s}(-1)^sq^{\big ( (8r+5)^2+(4s)^2-1 \big )/8}\\
&=\sum_{r,s}(-1)^{r+s}q^{\big ( (4r+1)^2+(4s)^2-1 \big )/8}\\
%&=\sum_{r,s}(-1)^{r+s}q^{2r^2+r+2s^2}\\
&=\sum_{r}(-1)^{r}q^{4\binom{r}{2}+3r}\sum_{s}(-1)^{s}q^{4\binom{s}{2}+2s}
={J}_{1,4}J_{2,4}.&(\text{by }(\ref{equation:JTPid}))
\end{align*}
\noindent The weighted set of solutions for $8k+1=x^2+2y^2$ yields $J_{1,2}\overline{J}_{1,4}$.  We have
\begin{align*}
\frac{1}{2}\sum_{r,s}&q^{\big ( (2r+1)^2+2(4s)^2-1 \big )/8}-\frac{1}{2}\sum_{r,s}q^{\big ( (2r+1)^2+2(4s+2)^2-1 \big )/8}\\
&=\frac{1}{2}\sum_{r,s}(-1)^sq^{\big ( (2r+1)^2+2(2s)^2-1 \big )/8}\\
%&=\frac{1}{2}\sum_{r,s}(-1)^sq^{r^2/2+r/2+s^2}\\
&=\frac{1}{2}\sum_{r}q^{\binom{r}{2}+r}\sum_{s}(-1)^sq^{2\binom{s}{2}+s}=\frac{1}{2}\overline{J}_{0,1}{J}_{1,2}=\overline{J}_{1,4}{J}_{1,2}.&(\text{by }(\ref{equation:JTPid}))
\end{align*}
The weighted set of solutions for $8k+1=x^2-2y^2$ yields

\begin{align}
&\sum_{\substack{r\ge 0 \\ -(4r+1)\le 2\cdot 4s<4r+1}}q^{[(4r+1)^2-2(4s)^2-1]/8}+\sum_{\substack{r\ge 0 \\ -(4r+3)\le 2\cdot (4s+2)<4r+3}}q^{[(4r+3)^2-2(4s+2)^2-1]/8}\notag\\
& \ \ \ \ \ -\sum_{\substack{r\ge 0 \\ -(4r+1)\le 2\cdot (4s+2)<4r+1}}q^{[(4r+1)^2-2(4s+2)^2-1]/8}-\sum_{\substack{r\ge 0 \\ -(4r+3)\le 2\cdot 4s<4r+3}}q^{[(4r+3)^2-2(4s)^2-1]/8}\notag\\
=&\sum_{\substack{r\ge 0 \\ -(2r+1)\le 2\cdot 4s<2r+1}}(-1)^rq^{[(2r+1)^2-2(4s)^2-1]/8}
-\sum_{\substack{r\ge 0 \\ -(2r+1)\le 2\cdot (4s+2)<2r+1}}(-1)^rq^{[(2r+1)^2-2(4s+2)^2-1]/8}\notag\\
=&\sum_{\substack{r\ge 0 \\ -(2r+1)\le 2\cdot 2s<2r+1}}(-1)^{r+s}q^{[(2r+1)^2-2(2s)^2-1]/8}
.\label{equation:Id1-A}
\end{align}
Using the substitution $r=-1-r$, we rewrite (\ref{equation:Id1-A}):
\begin{align}
&\frac{1}{2}\sum_{\substack{r\ge 0 \\ -(2r+1)\le 2\cdot 2s<2r+1}}(-1)^{r+s}q^{[(2r+1)^2-2(2s)^2-1]/8}\notag-\frac{1}{2}\sum_{\substack{r< 0 \\ (2r+1)\le 2\cdot 2s<-2r-1}}(-1)^{r+s}q^{[(2r+1)^2-2(2s)^2-1]/8}\notag\\
&=\frac{1}{2}\Big ( \sum_{\substack{2r-4s+1>0\\2r+4s+1\ge 0}}-\sum_{\substack{2r-4s+1\le 0\\2r+4s+1<0}}\Big )(-1)^{r+s}q^{[(2r+1)^2-2(2s)^2-1]/8}\notag\\
&=\frac{1}{2}\Big ( \sum_{\substack{r-2s\ge 0\\r+2s\ge 0}}-\sum_{\substack{r-2s< 0\\r+2s<0}}\Big )(-1)^{r+s}q^{[(2r+1)^2-2(2s)^2-1]/8}.\label{equation:Id1-B}
\end{align}
If we let $u=r+2s$ and $v=r-2s$ and sum over $(u,v)$, we must have that $r=(u+v)/2$ and $s=(u-v)/4$ where $u\equiv v \pmod 4$.  So we can write (\ref{equation:Id1-B}) as
\begin{align}
\frac{1}{2}\Big ( f_{1,3,1}(q^2,q^2,q^2)&-qf_{1,3,1}(q^4,q^4,q^2)+q^3f_{1,3,1}(q^6,q^6,q^2)-q^6f_{1,3,1}(q^8,q^8,q^2)\Big )\label{equation:Id1-C}\\
&=f_{1,3,1}(q^2,q^2,q^2)+q^3f_{1,3,1}(q^6,q^6,q^2)&\text{(by (\ref{equation:H7eq1.14}))}\notag\\
&=f_{1,3,1}(q^{3/4},-q^{3/4},-q^{1/2}).&\text{(by (\ref{equation:fabc-mod2}))}\notag
\end{align}

The first equality of (\ref{equation:3field-1}) follows from a simple product rearrangement.  For the second equality  of (\ref{equation:3field-1}), use Proposition \ref{proposition:f131} to see that (\ref{equation:Id1-C}) can be evaluated as 
\begin{align*}
f_{1,3,1}(q^{3/4},-q^{3/4},-q^{1/2})&=\Big [j(-q^{3/4};-q^{1/2})+j(q^{3/4};-q^{1/2}) \Big ]m(-q,q^4,-1)\\
&\ \ \ \ \ -\frac{q^2J_{1,2}J_{4,8}J_{3,4}J_{10,8}}{J_{3,4}^2}.
\end{align*}
Using (\ref{equation:jsplit}), we note that $j(x;q)=j(-qx^2;q^4)-xj(-q^3x^2;q^4)$.  It follows that the bracketed expression vanishes yielding
\begin{align}
f_{1,3,1}(q^{3/4},-q^{3/4},-q^{1/2})=-\frac{q^2J_{1,2}J_{4,8}J_{3,4}J_{10,8}}{J_{3,4}^2}
=J_{1,2}\overline{J}_{1,4}.
\end{align}

\begin{proof}[Proof of Theorem  \ref{theorem:Kap-Thm}]  With a simple change of variables, we can rewrite the weights of solutions of $8k+1=x^2+y^2$ and $8k+1=x^2+2y^2$ in terms of the weights of solutions of $8k+1=x^2+16y^2$ and $8k+1=x^2+8y^2$.  We then have that the excess of the number of solutions of $8k+1=x^2+16y^2$ $(x>0)$ with 
\begin{equation*}
 x\equiv \pm 1 \pmod{8}, \ y\textup{ even}, \text{ or }x\equiv \pm 3 \pmod{8}, \ y\textup{ odd } 
\end{equation*}
over the number with
\begin{equation*}
x\equiv \pm 3 \pmod{8},\  y\textup{ even}, \text{ or } x\equiv \pm 1 \pmod{8}, \ y\textup{ odd } 
\end{equation*}
equals the number of excess of solutions of $8k+1=x^2+8y^2$ $(x>0)$ with $y$ even over the number with $y$ odd.

If $8k+1$ is prime, then there are exactly two representations by each of these quadratic forms, with one obtained from the other by negating $y$.  So if $p\equiv 1 \pmod{8}$ is prime, the $p$'s unique representation of the form $x^2+16y^2$ $(x>0, \ y>0)$ has
\begin{equation*}
 x\equiv \pm 1 \pmod{8}, \ y\textup{ even}, \text{ or }  x\equiv \pm 3 \pmod{8}, \ y\textup{ odd }
\end{equation*}
if and only if $p$'s unique representation of the form  $x^2+8y^2$ $(x>0, \ y>0)$ has $y$ even, i.e., iff $p$ has a representation of the form $x^2+32y^2.$

We now consider the two possibilities for $p$ mod $16$:

If $p\equiv 1 \pmod{16}$, then, in the representation $p=x^2+16y^2$, we must have that $x\equiv \pm 1 \pmod{8}$.  Thus, $p$'s representation in this form has $y$ even iff $p$ has a representation of the form $x^2+32y^2.$  In other words, $p$ has a representation of the form $x^2+64y^2$ iff $p$ has a representation of the form $x^2+32y^2.$

If $p\equiv 9 \pmod{16}$, then, in the representation $p=x^2+16y^2$, we must have that $x\equiv \pm 3 \pmod{8}$.  Thus, $p$'s representation in this form has $y$ odd iff $p$ has a representation of the form $x^2+32y^2.$  In other words, $p$ does not have a representation of the form $x^2+64y^2$ iff $p$ has a representation of the form $x^2+32y^2.$
\end{proof}

%%%%%%%%%%%%%
%%%%%%%%%%%%%
%%%%%%%%%%%%%

\section{Proofs of Theorems \ref{theorem:Brink-Thm1} and \ref{theorem:Mu-Thm1}}\label{section:B1-Mu1}

For this threefield identity, we have $D=-1$, $E=-5$, $F=5$; however, we will omit the Hecke-type sum.  We claim the following for any given $k\ge 0$.  The excess of the number of inequivalent solutions of $4k+1=x^2+y^2$ ($x>0$ odd) in which
\begin{align*}
&x\equiv \pm 1 \pmod {10}, \ y\equiv 0 \pmod {10}, \ \text{or } x\equiv 5 \pmod {10}, \ y\equiv \pm 2 \pmod {10},\\
\text{or }&x\equiv 3\pmod {10}, \ y\equiv 2,6 \pmod {10}, \ \text{or }x\equiv 7\pmod {10}, \ y\equiv 4,8 \pmod {10},\\
\text{or }&x\equiv 1\pmod {10}, \ y\equiv 6 \pmod {10}, \ \text{or }x\equiv 9\pmod {10},\  y\equiv 4 \pmod {10},
\end{align*}
over those in which 
\begin{align*}
&x\equiv \pm 3 \pmod {10}, \ y\equiv 0 \pmod {10}, \ \text{or } x\equiv 5 \pmod {10}, \ y\equiv \pm 4 \pmod {10},\\
\text{or }&x\equiv 1\pmod {10}, \ y\equiv 2,4 \pmod {10}, \ \text{or }x\equiv 9\pmod {10}, \ y\equiv 6,8 \pmod {10},\\
\text{or }&x\equiv 3\pmod {10}, \ y\equiv 8 \pmod {10}, \ \text{or }x\equiv 7\pmod {10},\  y\equiv 2 \pmod {10},
\end{align*}
equals the excess of the number of inequivalent solutions of $4k+1=x^2+5y^2$ ($x\ge 0$) in which $x$ is odd and $y$ is even over those in which $x$ is even and  $y$ is odd.  For $4k+1=x^2+y^2$ ($x>0$ odd), we ignore solutions with
\begin{align*}
&x\equiv 1\pmod {10}, \ y\equiv 8\pmod {10}, \ \text{or }x\equiv 9\pmod {10}, \ y\equiv 2 \pmod {10},\\
\text{or }&x\equiv 3\pmod {10}, \ y\equiv 4 \pmod {10}, \ \text{or }x\equiv 7\pmod {10},\  y\equiv 6 \pmod {10},\\
\text{or }&x\equiv 5\pmod {10}, \ y\equiv 0 \pmod {10}.
\end{align*}

We show that in terms of generating functions, that this is equivalent to
\begin{equation}
J_{1,5}J_{2,5}=J_{1}J_{5}.\label{equation:3field-2}
\end{equation}
We rewrite the system of weights for $4k+1=x^2+y^2$ as the excess of the number inequivalent solutions in which $x+3y\equiv \pm 1 \pmod 5$ over those in which $x+3y\equiv \pm 2 \pmod 5.$  We ignore solutions with $x+3y\equiv 0 \pmod 5$.  It is then straightforward to show that the generating function is
\begin{align*}
&\overline{J}_{3,10}\overline{J}_{6,10}-q\overline{J}_{2,10}\overline{J}_{1,10}
=J_{1,5}J_{2,5}. &(\text{by }(\ref{equation:H1Thm1.1}))
\end{align*}
For the weighted set of solutions to $4k+1=x^2+5y^2$, it is straightforward to show that the generating function is
\begin{align*}
\frac{1}{2}\Big ( \overline{J}_{0,2}\overline{J}_{5,10}-q\overline{J}_{1,2}\overline{J}_{0,10}\Big )
&=\frac{1}{2}\Big [2\overline{J}_{25,60}\overline{J}_{7,12}+2q^2\overline{J}_{15,60}\overline{J}_{3,12}+2q^6\overline{J}_{5,60}\overline{J}_{1,12}\Big ]\\
&\ \ \ \ -\frac{q}{2}\Big [ 2\overline{J}_{25,60}\overline{J}_{1,12}+2q\overline{J}_{15,60}\overline{J}_{3,12}+2q^4\overline{J}_{5,60}\overline{J}_{5,12} \Big ]\\
&=\overline{J}_{25,60}J_{1,3}-q^5\overline{J}_{5,60}J_{1,3}\\
&=J_{5}J_{1},
\end{align*}
where the first equality follows from (\ref{equation:Thm1.3AH6}) with $q\rightarrow q^2$, $n= 5$, $x= -1$, $y= -q^5$  for the first bracketed expression and $q\rightarrow q^2$, $n= 5$, $x= -q$, $y= -1$ for the second bracketed expression.  The last two equalities follow from (\ref{equation:jsplit}).  Equality in (\ref{equation:3field-2}) follows from a simple product rearrangement.

\begin{proof}[Proof of Theorem \ref{theorem:Brink-Thm1}] If $4k+1$ is prime, then there are exactly two representations by each of the quadratic forms $x^2+y^2$ ($x>0$ odd) and $x^2+5y^2$ ($x\ge 0$), with one obtained from the other by negating $y$.  We now consider the two possibilities for $p \pmod{20}$:

If $p\equiv 1 \pmod {20}$, then in the representation $p=x^2+y^2$ we must have
\begin{align*}
&x\equiv \pm 1 \pmod {10}, \ y\equiv 0 \pmod {10}, \ \text{or } x\equiv 5 \pmod {10}, \ y\equiv \pm 4 \pmod {10}.
\end{align*}
The $p$'s representation in this form has $y\equiv 0 \pmod {10}$ iff $p$ has a representation of the form $x^2+20y^2.$

If $p\equiv 9 \pmod {20}$, then in the representation $p=x^2+y^2$ we must have
\begin{align*}
&x\equiv  5 \pmod {10}, \ y\equiv \pm 2 \pmod {10}, \ \text{or } x\equiv \pm 3 \pmod {10}, \ y\equiv 0 \pmod {10}.
\end{align*}
The $p$'s representation in this form has $y\equiv 0 \pmod {10}$ iff $p$ does not have a representation of the form $x^2+20y^2.$
\end{proof}
\begin{proof}[Proof of Theorem \ref{theorem:Mu-Thm1}] If $p\equiv 1 \pmod {20}$, then in the representation $p=x^2+y^2$ we must have
\begin{align*}
&x\equiv \pm 1 \pmod {10}, \ y\equiv 0 \pmod {10}, \ \text{or } x\equiv 5 \pmod {10}, \ y\equiv \pm 4 \pmod {10}.
\end{align*}
Thus $A$ is even iff $x\equiv 5 \pmod {10}, \ y\equiv \pm 4 \pmod {10}$, i.e. $5|M$. If $p\equiv 9 \pmod {20}$, then in the representation $p=x^2+y^2$ we must have
\begin{align*}
&x\equiv 5 \pmod {10}, \ y\equiv \pm 2 \pmod {10}, \ \text{or } x\equiv \pm 3 \pmod {10}, \ y\equiv 0 \pmod {10}.
\end{align*}
Thus $A$ is even iff $x\equiv \pm 3 \pmod {10}, \ y\equiv 0 \pmod {10}$, i.e. $5|N$.
\end{proof}

%%%%%%%%%%%%%%%
%%%%%%%%%%%%%%%
%%%%%%%%%%%%%%%
\section{Proof of Theorem \ref{theorem:NewKap1}}\label{section:NewKap}

For this threefield identity we use $D=-1$, $E=-2$, $F=2$ but again omit the Hecke-type sum.  We claim the following.  The excess of the number of inequivalent solutions of $12k+1=x^2+y^2$ ($x>0$ odd)  in which
\begin{align*}
&x\equiv \pm 1,\pm 7 \pmod{24}, \ y\equiv 0 \pmod{24}; x\equiv \pm 5,\pm 11 \pmod{24}, \ y\equiv 12 \pmod{24}; \\
&x\equiv \pm 3 \pmod{24}, \ y\equiv \pm 4 \pmod{24}; {\text { or }} x\equiv \pm 9 \pmod{24}, \ y\equiv \pm 8 \pmod{24},
\end{align*}
over those in which
\begin{align*}
&x\equiv \pm 5,\pm 11 \pmod{24}, \ y\equiv 0 \pmod{24}; x\equiv \pm 1,\pm 7 \pmod{24}, \ y\equiv 12 \pmod{24}; \\
&x\equiv \pm 9 \pmod{24}, \ y\equiv \pm 4 \pmod{24}; {\text { or }} x\equiv \pm 3 \pmod{24}, \ y\equiv \pm 8 \pmod{24},
\end{align*}
equals the excess of the number of inequivalent solutions of $12k+1=x^2+2y^2$ ($x>0$) in which $x\equiv \pm 1 \pmod{6}$  and $y\equiv 0 \pmod{12}$ over those in which  $x\equiv \pm 1 \pmod{6}$ and $y\equiv 6 \pmod{12}$.  We ignore solutions in the first weight system in which $x\equiv 2 \pmod 4,$ for these give the coefficents of $q^n,$ where $n$ is odd.

In terms of generating functions, this is equivalent to
\begin{equation}
\overline{J}_{1,3}j(q;-q^3)=J_{6,12}\overline{J}_{2,6}, \label{equation:3field-3}
\end{equation}
which itself holds by a simple product rearrangement.  A straight forward argument shows that the first weight system yields
\begin{align*}
J_{12,24}j(q^2;-q^6)&+q^2J_{20,24}j(-1;-q^6)\\
&=\frac{J_{6,12}\overline{J}_{6,12}}{J_{12,24}}\cdot j(q^2;-q^6)
+q^2\cdot\frac{J_{10,12}\overline{J}_{10,12}}{J_{12,24}}\cdot j(-1;-q^6)&(\text{by (\ref{equation:1.12}}))\\
&=\frac{J_{6,12}\overline{J}_{6,12}}{J_{12,24}}\cdot \frac{J_{2,12}\overline{J}_{8,12}}{J_{6,24}}
+q^2\cdot\frac{J_{10,12}\overline{J}_{10,12}}{J_{12,24}}\cdot \frac{\overline{J}_{0,12}J_{6,12}}{J_{6,24}}&(\text{by (\ref{equation:1.11}}))\\
&=\frac{J_{6,12}J_{2,12}}{J_{12,24}J_{6,24}}\cdot \Big ( \overline{J}_{6,12}\overline{J}_{8,12}+q^2\overline{J}_{10,12}\overline{J}_{0,12}\Big )\\
&=\frac{J_{6,12}J_{2,12}}{J_{12,24}J_{6,24}}\cdot \Big ( \overline{J}_{2,6}^2\Big )&(\text{by (\ref{equation:H1Thm1.1}}))\\
&=\overline{J}_{1,3}j(q;-q^3),
\end{align*}
and that the second weight system yields $J_{6,12}\overline{J}_{2,6}.$

\begin{proof}[Proof of Theorem \ref{theorem:NewKap1}]  If $p\equiv 1 \pmod {48}$ then in the representation $p=x^2+y^2$ we have
\begin{align*}
&x\equiv \pm 1,\pm 7 \pmod{24}, \ y\equiv 0 \pmod{24}, {\text { or }} x\equiv \pm 9 \pmod{24}, \ y\equiv \pm 8 \pmod{24}, \\
&x\equiv \pm 1, \pm 7 \pmod{24}, \ y\equiv 12 \pmod{24}, {\text { or }} x\equiv \pm 9 \pmod{24}, \ y\equiv \pm 4 \pmod{24}.
\end{align*}
Then $p's$ representation in this form has $y\equiv 0 \pmod 8$ iff $p=x^2+288y^2.$   

If $p\equiv 25 \pmod {48}$ then in the representation $p=x^2+y^2$ we have
\begin{align*}
&x\equiv \pm 5,\pm 11 \pmod{24}, \ y\equiv 12 \pmod{24}, {\text { or }} x\equiv \pm 3 \pmod{24}, \ y\equiv \pm 4 \pmod{24}, \\
&x\equiv \pm 5, \pm 11 \pmod{24}, \ y\equiv 0 \pmod{24}, {\text { or }} x\equiv \pm 3 \pmod{24}, \ y\equiv \pm 8 \pmod{24}.
\end{align*}
Then $p's$ representation in this form has $y\equiv 0 \pmod 8$ iff p is not of the form $x^2+288y^2.$
\end{proof}
\begin{remark}  We could also prove Theorem \ref{theorem:NewKap1} using Kaplansky's theorem on quadratic forms; however, the focus of this paper is to use threefield identities.

\end{remark}

\section{Proofs of Theorems \ref{theorem:NewKap2} and \ref{theorem:Mu-Thm3}}\label{section:NewKap2-Mu3}

Theorems \ref{theorem:NewKap2} and \ref{theorem:Mu-Thm3} follow from identities (\ref{equation:NewKap2-A})-(\ref{equation:Mu-Thm3B3-A}), which here are (\ref{equation:NewKap2})-(\ref{equation:Mu-Thm3B3}).  We state the quadratic forms and their weighted solution sets which have identities (\ref{equation:NewKap2-A})-(\ref{equation:Mu-Thm3B3-A}) as their generating functions.  Obtaining the generating functions is easy, so it will be omitted.  We then prove the two theorems.

Theorem \ref{theorem:NewKap2} and the case $p\equiv 1 \pmod 8$ of Theorem \ref{theorem:Mu-Thm3} follow from the identity:
\begin{equation}
J_{1,4}J_{7,14}+qJ_{7,28}\overline{J}_{0,2}=\overline{J}_{1,4}J_{14,28}\label{equation:NewKap2-0}
\end{equation}
or equivalently
\begin{equation}
q\overline{J}_{2,8}j(q^7;-q^7)+J_{1,4}J_{7,14}=\overline{J}_{1,4}J_{14,28}\label{equation:NewKap2}
\end{equation}

For the left-hand side of (\ref{equation:NewKap2-0}) we have two parts:
\noindent $J_{1,4}J_{7,14}$ is the generating function for the following weighted set of solutions to $8k+1=x^2+14y^2$, $k\ge 0$.  Here $x>0$. This is the excess of the number of inequivalent solutions with
\begin{align*}
x\equiv \pm 1 \pmod{8}, \ y\equiv 0 \pmod{4}, {\text { or }} x\equiv \pm 3 \pmod{8}, \ y\equiv  2 \pmod{4},
\end{align*}
over the number with
\begin{align*}
x\equiv \pm 3 \pmod{8}, \ y\equiv 0 \pmod{4}, {\text { or }} x\equiv \pm 1 \pmod{8}, \ y\equiv  2 \pmod{4}.
\end{align*}

\noindent  $qJ_{7,28}\overline{J}_{0,2}$ corresponds to the following weighted set of solutions to $8k+1=7x^2+2y^2$.  Here $x>0$ and $y$ is odd.  This is the excess of the number of inequivalent solutions with $x\equiv \pm 1 \pmod{8},$ over the number with $x\equiv \pm 3 \pmod{8}.$  For the right-hand side of (\ref{equation:NewKap2-0}), we consider the following weighted set of solutions to $8k+1=x^2+7y^2$.  Here $x>0$ is odd and $y\equiv 0 \pmod 4$.  This is the excess of the number of inequivalent solutions with $y\equiv 0 \pmod{8},$ over the number with $y\equiv 4 \pmod{8}.$

\vskip 0.1in
The case $p\equiv 7 \pmod 8$ of Theorem \ref{theorem:Mu-Thm3} follows from the following three identities:
\begin{align}
\overline{J}_{2,8}j(q^5;-q^7)+qJ_{1,4}J_{1,14}&=\overline{J}_{1,4}J_{10,28},\label{equation:Mu-Thm3B1}\\
\overline{J}_{2,8}j(q^3;-q^7)-J_{1,4}J_{5,14}&=q\overline{J}_{1,4}J_{6,28},\label{equation:Mu-Thm3B2}\\
\overline{J}_{2,8}j(q;-q^7)-J_{1,4}J_{3,14}&=q^2\overline{J}_{1,4}J_{2,28}.\label{equation:Mu-Thm3B3}
\end{align}

For the upcoming quadratic forms, we combine the inequivalent classes
\begin{equation*}
\{ (x,y), (x,-y)\} \text{ and } \{ (-x,-y), (-x,y)\},
\end{equation*}
into a single group and call it a solution set.

We discuss identity (\ref{equation:Mu-Thm3B1}).  For the left-hand side of (\ref{equation:Mu-Thm3B1}), we first consider $56k+23=x^2+14y^2$.  It is straightforward to show that $\overline{J}_{2,8}j(q^5,-q^7)$ is the generating function for the excess of the number of solution sets with
\begin{align*}
x\equiv \pm 3 \pmod {56}, \ y \text{ odd},  \text{ or } x\equiv \pm 11 \pmod {56}, \ y \text{ odd}
\end{align*}
over the number with
\begin{align*}
x\equiv \pm 17 \pmod {56}, \ y \text{ odd},  \text{ or } x\equiv \pm 25 \pmod {56}, \ y \text{ odd}.
\end{align*}
We now consider $56k+23=7x^2+2y^2$.  Here, $q{J}_{1,4}J_{1,14}$ is the generating function for the excess of the number of solution sets with
\begin{align*}
x\equiv \pm 1 \pmod {8}, \ y\equiv \pm 6 \pmod{28},  \text{ or } x\equiv \pm 3 \pmod {8}, \ y \equiv \pm 8 \pmod {28}
\end{align*}
over the number with
\begin{align*}
x\equiv \pm 1 \pmod {8}, \ y\equiv \pm 8 \pmod{28},  \text{ or } x\equiv \pm 3 \pmod {8}, \ y \equiv \pm 6 \pmod {28}.
\end{align*}
For the right-hand side of (\ref{equation:Mu-Thm3B1}), we consider $56k+23=x^2+7y^2$.  Here, $\overline{J}_{1,4}J_{10,28}$ is the generating function for the excess of the number of solution sets with $x\equiv \pm 4 \pmod {56}, \ y \text{ odd}$ over the number with $x\equiv \pm 24 \pmod {56}, \ y \text{ odd}.$

We discuss identity (\ref{equation:Mu-Thm3B2}).  For the left-hand side, we first consider $56k+71=x^2+14y^2$.  It is straightforward to show that $q^{-1}\overline{J}_{2,8}j(q^3,-q^7)$ is the generating function for the excess of the number of solution sets with
\begin{align*}
x\equiv \pm 1 \pmod {56}, \ y \text{ odd},  \text{ or } x\equiv \pm 15 \pmod {56}, \ y \text{ odd}
\end{align*}
over the number with
\begin{align*}
x\equiv \pm 13 \pmod {56}, \ y \text{ odd},  \text{ or } x\equiv \pm 27 \pmod {56}, \ y \text{ odd}.
\end{align*}
We now consider $56k+71=7x^2+2y^2$.  Here, $-q^{-1}{J}_{1,4}J_{5,14}$ is the generating function for the excess of the number of solution sets with
\begin{align*}
x\equiv \pm 1 \pmod {8}, \ y\equiv \pm 12 \pmod{28},  \text{ or } x\equiv \pm 3 \pmod {8}, \ y \equiv \pm 2 \pmod {28}
\end{align*}
over the number with
\begin{align*}
x\equiv \pm 1 \pmod {8}, \ y\equiv \pm 2 \pmod{28},  \text{ or } x\equiv \pm 3 \pmod {8}, \ y \equiv \pm 12 \pmod {28}.
\end{align*}
For the right-hand side of (\ref{equation:Mu-Thm3B2}), we consider $56k+71=x^2+7y^2$.  Here, $\overline{J}_{1,4}J_{6,28}$ is the generating function for the excess of the number of solution sets with $x\equiv \pm 8 \pmod {56}, \ y \text{ odd}$ over the number with $x\equiv \pm 20 \pmod {56}, \ y \text{ odd}.$

We discuss identity (\ref{equation:Mu-Thm3B3}).  For the left-hand side, we first consider $56k+151=x^2+14y^2$.  It is straightforward to show that $q^{-2}\overline{J}_{2,8}j(q,-q^7)$ is the generating function for the excess of the number of solution sets with
\begin{align*}
x\equiv \pm 5 \pmod {56}, \ y \text{ odd},  \text{ or } x\equiv \pm 19 \pmod {56}, \ y \text{ odd}
\end{align*}
over the number with
\begin{align*}
x\equiv \pm 9 \pmod {56}, \ y \text{ odd},  \text{ or } x\equiv \pm 23 \pmod {56}, \ y \text{ odd}.
\end{align*}
We now consider $56k+151=7x^2+2y^2$.  Here, $-q^{-2}{J}_{1,4}J_{3,14}$ is the generating function for the excess of the number of solution sets with
\begin{align*}
x\equiv \pm 1 \pmod {8}, \ y\equiv \pm 10 \pmod{28},  \text{ or } x\equiv \pm 3 \pmod {8}, \ y \equiv \pm 4 \pmod {28}
\end{align*}
over the number with
\begin{align*}
x\equiv \pm 1 \pmod {8}, \ y\equiv \pm 4 \pmod{28},  \text{ or } x\equiv \pm 3 \pmod {8}, \ y \equiv \pm 10 \pmod {28}.
\end{align*}
For the right-hand side of (\ref{equation:Mu-Thm3B3}), we consider $56k+151=x^2+7y^2$.  Here, $\overline{J}_{1,4}J_{2,28}$ is the generating function for the excess of the number of solution sets with $x\equiv \pm 12 \pmod {56}, \ y \text{ odd}$ over the number with $x\equiv \pm 16 \pmod {56}, \ y \text{ odd}.$
 
\begin{proof}[Proof of Theorem \ref{theorem:NewKap2}]  We first note that for a prime $p$
\begin{equation*}
p=x^2+7y^2 \iff p \equiv 1,9,11,15,23,25 \pmod{28}.
\end{equation*}
So if $p\equiv 1, 9,25,57,65,81 \pmod {112}$ is prime then are exactly two representations by $x^2+7y^2$ ($x>0$) with one obtained from the other by negating $y$.  By congruence considerations: 
\begin{itemize}
\item if $p=x^2+14y^2$ has a solution with positive weight then $p\equiv 1,65,81 \pmod{112}$,
\item if $p=x^2+14y^2$ has a solution with negative weight then $p\equiv 9,25,57 \pmod{112}$,
\item if $p=7x^2+2y^2$ has a solution with positive weight then $p\equiv 9,25,57 \pmod{112}$,
\item if $p=7x^2+2y^2$ has a solution with negative weight then $p\equiv 1,65,81 \pmod{112}$.
\end{itemize}

If $p\equiv 1,65,81 \pmod{112}$, the $p$'s unique representation of the form $x^2+7y^2$ ($x>0$, $y>0$) has $y\equiv 0 \pmod 8$ iff $p$ has a representation of the form $x^2+14y^2$ in which case $y$ even, i.e., iff $p$ has a representation of the form $x^2+56y^2.$

If $p\equiv 9,25,57 \pmod{112}$, the $p$'s unique representation of the form $x^2+7y^2$ ($x>0$, $y>0$) has $y\equiv 0 \pmod 8$ iff $p$ has a representation of the form $7x^2+2y^2$, i.e., iff $p$ does not have representation of the form $x^2+56y^2.$
\end{proof}

\begin{proof}[Proof of Theorem \ref{theorem:Mu-Thm3}]  We only prove $p\equiv 1 \pmod 8$; the case $p\equiv 7 \pmod 8$ is similar and will also be omitted.  We have two cases.  For the first case, we suppose $p\equiv 1,65,81 \pmod {112}$.  Thus $p=A^2+14B^2$ is solvable iff $p=M^2+7N^2$ has a solution with $M\equiv 1 \pmod 8$ and $N\equiv 0 \pmod 8$, i.e., iff $2p+M+N\equiv 3 \pmod 8.$  For the second case, we suppose $p\equiv 9,25,57 \pmod {112}$.  Thus $p=A^2+14B^2$ is solvable iff $p=M^2+7N^2$ has a solution with $M\equiv 5 \pmod 8$ and $N\equiv 4 \pmod 8$, i.e., iff $2p+M+N\equiv 3 \pmod 8.$  The argument for $p=7C^2+2D^2$ is similar.
\end{proof}
\section{Conclusion}  Putting Kaplansky-like theorems into the context of threefield identities enabled us to give new proofs of old theorems as well as to find theorems that were not on Brink's list.  Moreover, we see the shadow of the threefield identity from the new proof of Kaplansky's Theorem \ref{theorem:Kap-Thm} in results of Barrucand and Cohn \cite{BC} and Williams \cite{W}.  \section*{Acknowledgements}
We would like to thank Dean Hickerson for his help in finding identity (\ref{equation:master-id}) and its proof.  We would also like to thank Kenneth Williams and William Jagy for valuable feedback.


\begin{thebibliography}{999999}

%\bibitem{Ai} A. Aigner, {\em Zahlentheorie}, de Gruyter, 1975.

\bibitem{Ap} M. P. Appell, {\em Sur les fonctions doublement p\'eriodiques de troisi\`eme esp\`ece}, Annales scientifiques de l'ENS, 3e s\'erie, t. I, p. \ 135, t. II, p.\ 9, t. III, p.\ 9, 1884-1886.

\bibitem{ADH} G. E. Andrews, F. J. Dyson, D. R. Hickerson, {\em Partitions and indefinite quadratic forms}, Invent. Math., {\bf 91} (1988), no. 3, pp. 391-407.

\bibitem{BC} P. Barrucand, H. Cohn, {\em Note on primes of type $x^2+32y^2$, class number and residuacity}, J. Reine Angew. Math., {\bf 238} (1969), pp. 67-70.

\bibitem{B1} D. Brink, {\em Five peculiar theorems on simultaneous representation of primes by quadratic forms}, Journal of Number Theory, {\bf 129} (2009), pp. 464-468.

\bibitem{B2} D. Brink, {\em Two theorems of Glaisher and Kaplansky}, Funct. Approx. Comment. Math., {\bf 41} (2009), pp. 163-165.

\bibitem{C} H. Cohen, {\em $q$-identities for Maass waveforms}, Invent. Math., {\bf 91} (1988), pp. 409-422.

\bibitem{G} C. F. Gauss, {\em Theorie der biquadratischen Reste, I,} in Arithmetische Untersuchungen, Chelsea reprint, 1969, pp. 511-533.

\bibitem{Gl} J. W. L. Glaisher, {\em On the expressions for the number of classes of a negative determinant, and on the numbers of positives in the octants of $P$}, Quart. J. Pure Appl. Math., {\bf 34} (1903), pp. 178-204.

%\bibitem{Ha} H. Hasse, {\em Bericht ueber neuere Untersuchungen und Probleme aus der Theorie der algebraischen Zahlkoerpfer}, Teil II, Teubner, Leipzig, 1930.

\bibitem{He22} E. Hecke, {\em Uber einen Zusammenhang zwischen elliptischen Modulfunktionen und indefiniten quadratischen Formen}, Mathematische Werke, Vandenhoeck and Ruprecht, Goettingen, (1970), no. 22.

\bibitem{He23} E. Hecke, {\em Zur Theorie der elliptischen Modulfunktionen}, Mathematische Werke, Vandenhoeck and Ruprecht, Goettingen, (1970), no. 23.

\bibitem{HM} D. Hickerson, E. Mortenson, {\em Hecke-type double sums, Appell-Lerch sums, and mock theta functions (I)}, submitted.

\bibitem{K} I. Kaplansky, {\em The forms $x+32 y^2$ and $x+64y^2$}, Proc. AMS, {\bf 131} (2003), no. 7, pp. 2299-2300.

\bibitem{L1} M. Lerch, {\em Pozn\'amky k theorii funkc\'i elliptick\'ych}, Rozpravy \v Cesk\'e Akademie C\'isa\v re Franti\v ska Josefa pro v\v edy, slovesnost a um\v en\' i v praze, {\bf 24}, (1892), pp. 465-480.

%\bibitem{L2} M. Lerch, {\em Bemerkungen zur Theorie der elliptischen Funktionen}, Jahrbuch \" uber die Fortschritte der Mathematik, {\bf 24}, (1892), pp. 442-445.

\bibitem{Mu} J. B. Muskat, {\em On simultaneous representations of primes by binary quadratic forms}, Journal of Number Theory, {\bf 19} (1984), pp. 263-282.

\bibitem{MSW} J. B. Muskat, B. K. Spearman, K. S. Williams, {\em Predictive criteria for the representation of primes by binary quadratic forms}, Acta Arith. {\bf 70} (1995), pp. 215-278.

%\bibitem{Se} J.-P. Serre, {\em Modular forms of weight one and Galois representations}, Algebraic number fields: $L$-functions and Galois properties. (Proc. Sympos., Univ. Durham, Durham, 1975), pp. 193-268, Academic Press, London, 1977.

\bibitem{W} K. S. Williams, {\em Note on a result of Barrucand and Cohn}, J. Reine Angew. Math., {\bf 258} (1976), pp. 218-220.

\bibitem{W2} K. S. Williams, {\em Private communication.}

\end{thebibliography}
\end{document}